\documentclass[11pt]{article}

\usepackage{amsmath,amsthm,amssymb, amsfonts,amscd}
\usepackage{mathtools}
\usepackage{bbm}
\usepackage{mathrsfs}
\usepackage{enumitem} 
\usepackage[utf8]{inputenc}
\usepackage[dvipsnames]{xcolor}
\usepackage{centernot}

\paperheight=29.7cm
\paperwidth=21cm
\setlength\textwidth{16cm}
\hoffset=-1in
\setlength\marginparsep{0cm}
\setlength\marginparwidth{0cm}
\setlength\marginparpush{0cm}
\setlength\evensidemargin{2.5cm}
\setlength\oddsidemargin{2.5cm}
\setlength\topmargin{2.4cm}
\setlength\headheight{0cm}
\setlength\headsep{0cm}
\voffset=-1in
\setlength\textheight{23.5cm}

\renewcommand{\epsilon}{{\varepsilon}}

\makeatother

\newtheorem{theorem}{Theorem}[section]
\newtheorem{lemma}[theorem]{Lemma}

\newtheorem{proposition}[theorem]{Proposition}

\theoremstyle{definition}
\newtheorem{definition}[theorem]{Definition}
\newtheorem{remark}[theorem]{Remark}


\newcommand{\Z}{\mathbb{Z}}
\newcommand{\R}{\mathbb{R}}
\newcommand{\N}{\mathbb{N}}

\newcommand{\E}{\mathbb{E}}
\newcommand{\prob}{\mathbb{P}}


\newcommand{\nlr}[4]{#3\mathrel{\mathop{\centernot\longleftrightarrow}_{#1}^{#2}} #4}

\allowdisplaybreaks[1]

\numberwithin{equation}{section}

\usepackage{titlesec}
\titleformat*{\section}{\large\bfseries}
\titlelabel{\thetitle.\quad}
\titleformat{\subsection}[runin]{\normalfont\bfseries}{\thesubsection.}{.5em}{}[.]\titlespacing{\subsection}{0pt}{2ex plus .1ex minus .2ex}{.8em}

\begin{document}

\title{{Chemical Distance for the Level Sets of the Gaussian Free Field}}

\author{
\normalsize{\textsc{Tal Peretz}\footnote{Technion - Israel Institute of Technology. 
E-mail: tal.peretz@campus.technion.ac.il}\ \ \ }}

\maketitle

\begin{abstract}
We consider the Gaussian free field $\varphi$ on $\Z^d$ for $d \geq 3$ and study the level sets $\{\varphi \geq h \}$ in the percolating regime. We prove upper and lower bounds for the probability that the chemical distance is much larger than Euclidean distance. Our proof uses a renormalization scheme combined with a bootstrap argument.
\newline
\newline
\emph{Keywords and phrases.} Gaussian free field; percolation; chemical distance; large deviations.
\newline
MSC 2020 \emph{subject classifications.}  60K35, 82B43.
\end{abstract}

\section{Introduction}
Percolation is one of the central topics in probability theory and over the past two decades there has been active research in such models which have long-range correlation. In this article we study a canonical example, the level-sets of the Gaussian free field (GFF) on $\Z^d$ for $d \geq 3$. This subject was first studied in \cite{BricmontLebowitzMaesPercolationGaussian}, and later reintroduced in \cite{RodriguezSznitmanPhaseTransition}. Since then, there has been much progress in understanding its percolation properties.

Denote $\{\varphi_x : x \in \Z^d\}$ to be the GFF on $\Z^d$, whose distribution we denote by $\prob$. More concretely, this is the centered Gaussian field such that $\E[\varphi_x \varphi_y] = g(x,y)$, where $g$ is the Green's function of the simple random walk on $\Z^d$, see \eqref{eq: green} for the definition. For a fixed height $h \in \R$, we are interested in the set
\begin{align*}
E^{\geq h} = \{x \in \Z^d: \varphi_x \geq h \},
\end{align*}
which we consider as a subgraph of $\Z^d$. Let $\{ 0 \xleftrightarrow{ \geq h} \infty\}$ denote the event there exists an infinite connected subset of $E^{\geq h}$ which contains the origin, and define the critical height
\begin{align*}
h_* =h_*(d)=\inf \{h \in \R: \prob[0 \xleftrightarrow{\varphi \geq h} \infty]= 0 \}.
\end{align*}
Rodriguez and Sznitman in \cite{RodriguezSznitmanPhaseTransition} showed that this parameter is critical in the following sense:
\begin{itemize}
\item[--] for $h<h_*$, $\prob$-a.s. $E^{\geq h}$ contains a unique infinite connected component,
\item[--] for $h>h_*$, $\prob$-a.s. $E^{\geq h}$ consists only of finite connected components.
\end{itemize} 
Today much is known about $h_*$, including that $h_* \in (0,\infty)$, see \cite{DrewitzPrevostRodriguezSign,RodriguezSznitmanPhaseTransition}, and $h_*(d) \sim \sqrt{2 \log d}$ as $d \to \infty$, see \cite{DrewitzRodriguezHigh}. Furthermore, from \cite{EqualityParametersGFF,GoswamiRodriguezSeveroRadius} we have that the level sets are in a strongly supercritical regime when $h<h_*$, and in a strongly subcritical regime when $h>h_*$: for all $h \neq h_* $
\begin{align} \label{eq: sharp-transition}
\prob[0 \xleftrightarrow{ \geq h} \partial B_N, \nlr{}{\geq h}{0}{\infty} ] \leq \begin{cases} e^{-cN/ \log N} & \text{for }d = 3 \\ e^{-cN} & \text{for }d \geq 4 \end{cases},
\end{align}
where the event refers to the origin lying in a finite connected component of $E^{\geq h}$ which intersects the boundary of $B_N = \{x \in \Z^d: \vert  x\vert_\infty \leq N \}$.

This article is interested in the graph distance on the level sets in the supercritical regime $h<h_*$. We define the chemical distance of $x,y \in E^{\geq h}$ as
\begin{align*}
\rho_h(x,y) = \inf\{n \in \N: \exists z_1,\ldots, z_n \in E^{\geq h} \: \text{s.t.} \: \vert z_{i+1}-z_i \vert_1 = 1, z_1 = x, z_n = y \},
\end{align*}
where we use the convention $\inf \varnothing = \infty$. For $h < h_*$, let $ \mathcal S_N(h)$ be the vertices in $E^{\geq h}$ which are in connected components with $\vert \cdot \vert_\infty$-diameter greater than $N$. The chemical distance of $E^{\geq h}$, as well as for many other percolation models, was first studied in \cite{DrewitzRathSapozhnikovChemical}. The authors showed that with very high probability the chemical distance of the GFF is comparable to the Euclidean norm: for $h<h_*$ there exists constants $\Delta = \Delta(d,h)>0$, $C = C(d,h)>0$ and $c = c(d,h)>0$ such that 
\begin{align}\label{eq: initial-chemical-bound}
\prob[\exists x,y \in \mathcal S_N(h) \cap B_N, \: \rho_h(x,y) > C N]\leq \exp \left(-c  (\log N)^{1+\Delta} \right).
\end{align}
Their proof is a multiscale argument which is robust enough to apply to many percolation models with long-range correlation. However, the stretched exponential bound is a byproduct of their methods and is not expected to be sharp for our case.
\begin{theorem} \label{thm: chem-bound}
For $d \geq 3$ and $h< h_*$, there exists $c=c(d,h)>0$ and $C=C(d,h)>0$ such that
\begin{align*}
\prob[\exists x,y \in \mathcal S_N(h) \cap B_N, \: \rho_h(x,y) > C N]\leq \exp \left(-c  N^{1-2/d} \right).
\end{align*}
\end{theorem}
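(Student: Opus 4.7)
The plan is to set up a multi-scale renormalization scheme on geometric scales $L_n = \ell_0^n L_0$, combined with a bootstrap argument that uses sprinkling decoupling for the GFF. At each scale, I declare a cube $Q$ of side $L_n$ to be \emph{good} if every pair $x, y \in \mathcal S_{L_n}(h) \cap Q$ satisfies $\rho_h(x, y) \leq C L_n$ (evaluated inside a slightly enlarged neighborhood of $Q$), and \emph{bad} otherwise; write $\pi_n$ for the probability that a scale-$L_n$ cube is bad. The theorem will follow by a standard union bound over pairs $x,y \in B_N$ and cubes once I establish $\pi_n \leq \exp(-c L_n^{1-2/d})$.

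The base of the induction is supplied by \eqref{eq: initial-chemical-bound} from \cite{DrewitzRathSapozhnikovChemical}, which gives $\pi_{L_0} \leq \exp(-c (\log L_0)^{1 + \Delta})$ at a large but fixed scale $L_0$. The renormalization step rests on two geometric inputs. First, a bad scale-$L_{n+1}$ cube forces either a rare one-arm event (a vertex in $\mathcal S \cap Q$ lying in a long finite component, whose probability is controlled by \eqref{eq: sharp-transition}), or a \emph{blocking cascade} of many disjoint bad scale-$L_n$ sub-cubes arranged as a topological obstruction forcing paths in $E^{\geq h}$ to make long detours. Second, the sub-cubes in such a cascade can be chosen so that their local events live on mutually well-separated regions of $\Z^d$. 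I then apply the sprinkling decoupling inequality for the GFF (as developed in \cite{RodriguezSznitmanPhaseTransition}) to bound the probability of the simultaneous occurrence of the bad sub-cubes by a product at the shifted level $h + \epsilon_n$ plus a decoupling error $\exp(-c \epsilon_n^2 L_n^{d-2})$, with the amounts $\{\epsilon_n\}$ constrained so that $\sum_n \epsilon_n < h_* - h$.

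Combining these ingredients yields a recursion of the schematic form
\[
\pi_{n+1} \leq C \bigl( \pi_n^{K_n} + \exp(-c\, \epsilon_n^2 L_n^{d-2}) + \exp(-c L_n/\log L_n) \bigr),
\]
where $K_n$ is the combinatorial gain produced by the cascade. Starting from the stretched-exponential base bound and iterating, the $\pi_n^{K_n}$ factor causes rapid improvement in the exponent until the decay stabilizes at the fixed point of the iteration, which is precisely $\pi_n \approx \exp(-c L_n^{1-2/d})$, the scale at which the geometric amplification balances the sprinkling-constrained decoupling cost at every level.

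The main obstacle is the geometric/combinatorial input behind the renormalization step: demonstrating that a chemical distance blowup inside $Q$ really forces a cascade of bad sub-cubes at each intermediate scale that is dense enough and sufficiently separated to support the decoupling. Because $\rho_h$ allows arbitrary detours, the dual obstruction one must exhibit is more delicate than for the one-arm event addressed in \cite{GoswamiRodriguezSeveroRadius}; identifying the correct notion of blocking cascade, verifying the separation between its constituent bad sub-cubes, and tuning the sprinkling sequence $\epsilon_n$ so that the decoupling errors at all scales are dominated by the bootstrap gain while $\sum \epsilon_n < h_* - h$ is the delicate part, and it is precisely this balance that pins down the exponent $1 - 2/d$ in the statement of the theorem.
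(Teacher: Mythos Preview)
Your proposal outlines a plausible strategy but leaves the central step unresolved, as you yourself flag in the final paragraph. The geometric input you need---that a bad scale-$L_{n+1}$ cube forces a cascade of $K_n$ well-separated bad scale-$L_n$ sub-cubes---is genuinely problematic for chemical distance. A single bad sub-cube can in principle force a detour of order $L_n^d$ (its volume), which already exceeds $CL_{n+1} = C\ell_0 L_n$ once $L_n$ is large; so badness at the larger scale does not obviously force \emph{many} bad sub-cubes, and in particular the $K_n$ in your recursion could be $1$. Without a substitute for this step your recursion never starts, and the fixed-point heuristic for the exponent $1-2/d$ is not grounded. Moreover, the pairwise sprinkling inequality of \cite{RodriguezSznitmanPhaseTransition} controls two events; to decouple $K_n$ events simultaneously with an error governed by a global quantity (rather than an iterated pairwise error) you would need something sharper.

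The paper takes a different and in some ways simpler route: a \emph{single} scale $L_N$, chosen jointly with a budget $m_N$ of bad boxes so that $m_N L_N^d \asymp N$. The Gibbs--Markov decomposition $\varphi = \psi^{U_z} + \xi^{U_z}$ is used exactly rather than via sprinkling: $\psi$-badness is controlled by independence of the local fields together with the input estimate \eqref{eq: initial-chemical-bound}, while $\xi$-badness of $m$ well-separated boxes is controlled by Sznitman's Gaussian bound (Lemma~\ref{lem: harmonic-control}) in terms of $\operatorname{Cap}(\bigcup C_{z_i})$. The exponent $1-2/d$ then comes not from a recursion fixed point but from the deterministic capacity lower bound $\operatorname{Cap}(\bigcup_{i=1}^m C_{z_i}) \geq c\,m^{1-2/d}L^{d-2}$ (Lemma~\ref{lem: capacity}), which after optimizing over $m_N$ and $L_N$ yields $\exp(-cN^{1-2/d})$. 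On the complementary event of at most $m_N$ bad boxes, the Antal--Pisztora path construction (Proposition~\ref{prop: amend-path}) routes around bad clusters at total cost $O(m_N L_N^d) + O(N) = O(N)$. This sidesteps your cascade obstacle entirely: rather than arguing that bad chemical distance forces many bad boxes, one directly bounds the probability of many bad boxes and shows deterministically that few bad boxes imply good chemical distance.
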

We also provide complementary lower bounds.
\begin{theorem}\label{thm: chem-lower-bound}
For $h< h_*$ and $\alpha>1$, there exists $c=c(d,h,\alpha)>0$ such that
\begin{align*}
\prob[\exists x,y \in \mathcal S_N(h) \cap B_N, \: \rho_h(x,y) > \alpha  N] \geq \begin{cases} e^{-cN/\log N} & \text{ for } \: d = 3 \\ e^{-cN} &\text{ for }\: d \geq 4 \end{cases}.
\end{align*}
\end{theorem}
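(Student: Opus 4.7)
My strategy is to exhibit an explicit event $A_N$ of probability matching the stated lower bound, on which $\mathcal S_N(h) \cap B_N$ contains two vertices in distinct $E^{\geq h}$-clusters --- their chemical distance is then $\infty > \alpha N$. Fix a straight line segment $\mathcal L \subset B_{N/2}$ of length $N$ (so of $\ell^{\infty}$-diameter $\geq N$), and a tubular shell $\mathcal S$ at $\ell^{\infty}$-distance $r$ from $\mathcal L$ completely enclosing $\mathcal L$ in $\Z^d$, with $r$ chosen optimally in each dimension ($r = O(1)$ in $d \geq 4$, and $r = N^{\alpha}$ for some $\alpha \in (0,1)$ in $d = 3$). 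Consider
\[
A_N \;=\; \{\varphi_x \geq h \text{ for all } x \in \mathcal L\} \;\cap\; \{\varphi_x \leq h - 1 \text{ for all } x \in \mathcal S\}.
\]
On $A_N$, $\mathcal L$ is a connected subset of $E^{\geq h}$ of diameter $\geq N$, and its entire $E^{\geq h}$-cluster is confined within the region enclosed by $\mathcal S$ (any path escaping this region must cross $\mathcal S$, where $\varphi < h$); hence this cluster is finite.

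The probability of $A_N$ is lower bounded via a Cameron--Martin / Girsanov shift of the GFF. Choose $f : \Z^d \to \R$ equal to $h+1$ on $\mathcal L$, equal to $h-2$ on $\mathcal S$, and minimizing the Dirichlet energy $\mathcal E(f) = \tfrac{1}{2}\sum_{x \sim y}(f(x) - f(y))^2$ elsewhere (decaying to $0$ at infinity). Under the shifted measure where $\varphi$ has mean $f$, the two defining constraints of $A_N$ are a bounded number of standard deviations away from the mean, so $A_N$ is typical; the Girsanov formula then yields $\prob[A_N] \geq c \exp(-\tfrac{1}{2}\mathcal E(f))$. Decomposing $f$ into an inner part (harmonic between $\mathcal L$ and $\mathcal S$) and an outer part (harmonic outside $\mathcal S$), a capacity computation gives $\mathcal E(f) \asymp N$ in $d \geq 4$ and $\mathcal E(f) \asymp N/\log N$ in $d = 3$ --- the latter using the $\log$-factor in the effective resistance between a line segment and a concentric tube in three dimensions.

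Finally, for $h < h_*$ the infinite cluster has positive density, so it intersects $B_N$ with probability $1 - o(1)$. By quasi-locality of the GFF (or by simply placing $\mathcal L \cup \mathcal S$ in a small subregion of $B_N$ and using the infinite cluster's density in the complement), the event $A_N \cap \{\text{infinite cluster meets } B_N\}$ has probability of the same order as $\prob[A_N]$. On this combined event, picking $x$ to be an endpoint of $\mathcal L$ and $y$ any vertex of $B_N$ in the infinite cluster, both $x$ and $y$ lie in $\mathcal S_N(h) \cap B_N$ but in distinct $E^{\geq h}$-clusters; hence $\rho_h(x,y) = \infty > \alpha N$, proving the theorem.

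\textbf{Main obstacle.} The delicate step is the dimension-dependent Dirichlet energy estimate, especially the $\log N$ improvement in $d = 3$: this requires balancing the inner-gradient energy (from the jump between $+1$ on $\mathcal L$ and $-2$ on $\mathcal S$) against the outer capacity energy for $f$'s decay to zero, achieved by taking $r$ polynomially large. Combining $A_N$ with the infinite-cluster event is routine given the positive density of $C_{\infty}^{\geq h}$.
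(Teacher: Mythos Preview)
Your overall geometry is sound, and for $d \geq 4$ it leads to the correct bound --- but not via the Girsanov step you wrote. The assertion that ``$A_N$ is typical'' under the shift by $f$ is false: after centering you require $\varphi'_x \geq -1$ on $\mathcal L$ and $\varphi'_x \leq 1$ on $\mathcal S$ simultaneously at every one of $\gtrsim N$ sites, and since the field has bounded variance with polynomially decaying correlations, its extrema over such a set are of order $\pm\sqrt{\log N}$, so the shifted probability tends to zero. In $d \geq 4$ this is immaterial: with $r = O(1)$ one has $|\mathcal L| + |\mathcal S| = O(N)$, and a direct FKG bound on the unshifted measure already gives $\prob[A_N] \geq e^{-cN}$. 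The paper proceeds the same way in $d \geq 4$ (pointwise constraints plus FKG), except that it forces a long detour between two \emph{connected} points rather than trapping a finite cluster, thereby avoiding your extra decoupling with the infinite component.

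For $d = 3$ the gap is genuine and is precisely where you mislocated the difficulty. Pointwise constraints over $\gtrsim N$ sites cannot be made typical by a bounded shift, and inflating the shift to $h + C\sqrt{\log N}$ on $\mathcal L$ (so that the event does become typical) multiplies the Dirichlet energy by $\log N$ and erases the saving. The paper's remedy is to replace the pointwise events by percolation events measurable with respect to the level set: a connection event $D_N$ through a tube and a disconnection event $F_N$ across an annular shell. These events \emph{do} hold with probability tending to $1$ under a bounded shift into the supercritical (resp.\ subcritical) phase, by the local-uniqueness and subcritical one-arm estimates, and then Lemma~\ref{lem: cap-lowerbound} converts the capacity $\asymp N/\log N$ into the stated lower bound. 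Your trapped-cluster idea is essentially the supercritical one-arm lower bound of \cite{GoswamiRodriguezSeveroRadius}, which is proved by exactly this mechanism; invoking that result as a black box would repair your argument.
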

The upper bound improves on \eqref{eq: initial-chemical-bound}, however there remains a gap between the upper and lower bounds. We give intuition on the exponents in both theorems and for the discrepancy between them. Following the proof of Theorem \ref{thm: chem-bound}, one sees that the upper bound is dominated by the event the chemical distance is larger than $N$ in a box of radius $N^{1/d}$, which has exponential cost $N^{1-2/d}$. On the other hand, the lower bound is derived by forcing the path between two vertices to make a large detour of size $N$, which has exponential cost $N/\log N$ and $N$ for $d = 3$ and $d \geq 4$, respectively. As a point of comparison, for Bernoulli percolation the probability of the chemical distance being larger than Euclidean distance decays exponentially, see \cite{AntalPisztoraChemical}.

The technical contribution of this work is to use a renormalization scheme from \cite{SznitmanDisconnection} in order to bootstrap the estimate \eqref{eq: initial-chemical-bound}. We will partition $B_{N}$ into boxes $U$ of side-length $L $, which we will eventually take to infinity, and check the local connectivity of the level sets inside each box. To decouple the field, we will use the Markov property of the GFF: $\varphi= \psi^U + \xi^U$, where $\psi^U$ is a local independent GFF, and $\xi^U$ is a harmonic average field. We define a box $U$ being good if the local field $\psi^U$ has typical chemical distance, and that the harmonic average $\xi^U$ is not too small inside the box. By construction, the chemical distance $\rho_h$ behaves typically inside a box which is good with respect to $\psi^U$ and $\xi^U$. Using \eqref{eq: initial-chemical-bound} and the independence of the fields $\psi^U$, we can show that most boxes are good with respect to the local field. In order to estimate the number of good boxes with respect to the harmonic average, we will use a Gaussian estimate from \cite{SznitmanDisconnection}, see Lemma \ref{lem: harmonic-control}. On the event the number of bad boxes is not too large, we will be able to bound the chemical distance between any two connected points in $B_N$. 

The article is organized in the following way. In Section \ref{sec: notation}, we introduce notation and preliminary results for the level sets of GFF. In Section \ref{sec: renormalization}, we set up a renormalization scheme. We define notions of good and bad boxes, and prove bounds on the probability of having many bad boxes. In Section \ref{sec: connectivity}, we show that on the event of having few bad boxes, the chemical distance in $E^{\geq h}$ behaves well. In Section \ref{sec: proof} we gather all of the results to prove Theorem \ref{thm: chem-bound}, and in Section \ref{sec: lowerbound-gff} we prove Theorem \ref{thm: chem-lower-bound}.\\

Throughout the rest of this text, we denote $c,c',C,C',\ldots$ to be generic numbers in $(0,\infty)$ which change from line to line, while numbered constants $c_1,c_2,\ldots$ will be fixed throughout the text. We will note if they depend on parameters, with the exception of the dimension $d$. Lastly, some of our inequalities will only hold for large $N$ and $L$.

\section{Preliminaries} \label{sec: notation}
\subsection{Notation}
For a field $\chi: \Z^d \to \R$, we write $\{\chi \geq h \} = \{z \in \Z^d: \chi_z \geq h \} $. For two sets $U,V\subset \Z^d$, we define $\{U \xleftrightarrow{ \chi \geq h} V \}$ to be the event there exists a nearest-neighbor path in $\{\chi \geq h \} $ starting in $U$ and ending in $V$. For shorthand, we sometimes write $\{U \xleftrightarrow{  \geq h} V \}=\{U \xleftrightarrow{ \varphi \geq h} V \}$. For $x \in \Z^d$ we denote $\{x \xleftrightarrow{ \geq h} \infty \}$ to be the event $x$ lies in the unique infinite connected component of $E^{\geq h}$. We also define the complement of these events
\begin{align*}
\{ \nlr{}{\geq h}{U}{V} \} = \{U \xleftrightarrow{\geq h} V \}^c, \quad \{ \nlr{}{\geq h}{x}{\infty} \} = \{x \xleftrightarrow{\geq h} \infty \}^c.
\end{align*}

Denote $\vert \cdot \vert_p$ for $p \in \{1,2,\infty\}$ to be the usual $\ell_p$ norm on $\Z^d$. Define $B_N(x) = \{y \in \Z^d: \vert x - y \vert_\infty \leq N \}$ and $B_N = B_N(0)$. For finite $U \subset \Z^d$, we define its boundary $\partial U = \{ x \in U^c : \exists y \in U, \vert x - y  \vert_1 = 1 \}$. \\

For positive sequences $a_n$ and $b_n$, we write  $a_n \ll b_n$ if $\lim_{n \to \infty} a_n/b_n = 0$, $a_n = O(b_n)$ if $\limsup_{n \to \infty} a_n /b_n < \infty$ and $a_n \asymp b_n $ if $0 < \liminf_{n \to \infty} a_n/b_n \leq \limsup_{n \to \infty} a_n/b_n <\infty.$\\

Let $\{X_ n : n \in \N\}$ denote the simple random walk on $\Z^d$, and let $P^x$ denote its law conditioned on starting at $x \in \Z^d $. For $d \geq 3$, let 
\begin{align} \label{eq: green}
g(x,y) = \sum_{n=0}^\infty P^{x}[X_n = y]\quad \text{for } x,y \in \Z^d
\end{align}
denote its Green's function. We recall the well known fact
\begin{align} \label{eq: green-asymptotic}
g(x,y) \asymp \vert x -y \vert_\infty^{2-d} \:\: \text{as} \:\: \vert x - y \vert_\infty \to \infty.
\end{align}
For $U \subset \Z^d$, let $T_U = \inf \{ n \geq 0 , X_n \not \in U \}$ denote the exit time of $U$, and let $H_U = \inf \{n \geq 1 : X_n \in U \}$ be the hitting time of $U$. Define the Green's function killed outside of $U$
\begin{align*}
g_U(x,y) = \sum_{n=0}^\infty P^x(X_n = y , n < T_U)\quad \text{for } x,y \in \Z^d.
\end{align*}
For $U \subset \subset \Z^d$, define the equilibrium measure of $U$
\begin{align*}
e_U(x) = P^x[ H_U = \infty] \quad\text{for } x \in \Z^d,
\end{align*}
and the capacity of $U$ 
\begin{align*}
\text{Cap}(U)  = \sum_{x \in U }e_U(x).
\end{align*}
For $K \subset \subset U$, define the equilibrium measure of $K$ relative to $U$
\begin{align*}
e_{K,U}(x) = P^x[ H_K > T_U] \quad\text{for } x \in \Z^d,
\end{align*}
and the capacity of $K$ relative to $U$
\begin{align*}
\text{Cap}_U(K)  = \sum_{x \in K }e_{K,U}(x).
\end{align*}
We observe that $\text{Cap}_{\Z^d}(K)  = \text{Cap}(K)$.
\subsection{Covariance Structure}
A central element of our proof is the Gibbs-Markov decomposition which is expressed in the following lemma, see for instance \cite{SznitmanDisconnection}.
\begin{proposition} \label{prop: Gibbs-Markov}
For finite $U \subset \Z^d$, we have
\begin{align*} 
\varphi_x = \psi_x^U + \xi_x^U
\end{align*}
where 
\begin{itemize}
\item $\psi^U$ is a centered Gaussian field with covariance $g_U(\cdot,\cdot)$, independent of $\sigma(\varphi_z: z \in U^c)$.
\item  $\xi^U$ is the unique harmonic function on $U$ with boundary condition $\xi \vert_{U^c} = \varphi \vert_{U^c}$:
\begin{align*}
\xi^U_x = E^x[\varphi_{X_{T_U}}] = \sum_{y \in \Z^d} P^x[X_{T_U}=y] \varphi_y.
\end{align*}
\end{itemize} 
In particular, $\psi^U$ and $\xi^U$ are independent of each other.
\end{proposition}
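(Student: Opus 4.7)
The plan is to take the stated formula as the \emph{definition} of $\xi^U$, set $\psi^U := \varphi - \xi^U$, and then check the three claims (boundary/harmonicity of $\xi^U$, covariance of $\psi^U$, independence) in turn. The setup is essentially free: since $U$ is finite, $T_U < \infty$ $P^x$-a.s.\ and $X_{T_U}$ takes values in the finite set $\partial U \subset U^c$, so $\xi^U_x$ is a \emph{finite} linear combination of $\{\varphi_y : y \in U^c\}$. Consequently $(\varphi, \xi^U, \psi^U)$ is jointly centered Gaussian and every independence claim reduces to a covariance computation. Two easy observations come at no cost: for $z \in U^c$ we have $T_U = 0$ $P^z$-a.s., whence $\xi^U_z = \varphi_z$ (this is the boundary condition, and also forces $\psi^U \equiv 0$ on $U^c$); and for $x \in U$, conditioning on the first step of the walk together with the strong Markov property gives $\xi^U_x = \frac{1}{2d}\sum_{|e|_1 = 1} \xi^U_{x+e}$, which is the claimed harmonicity on $U$.

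The substantive input is the first-exit decomposition of the Green's function,
\begin{equation*}
g(x,y) \;=\; g_U(x,y) + E^x\!\left[g(X_{T_U},y)\right] \qquad \text{for } x \in U,\, y \in \Z^d,
\end{equation*}
obtained by splitting each trajectory from $x$ to $y$ at time $T_U$ and applying the strong Markov property. Using this, for $y \in U^c$ we have $g_U(x,y) = 0$, so
\begin{equation*}
\E[\xi^U_x \varphi_y] \;=\; \sum_{z} P^x[X_{T_U}=z]\, g(z,y) \;=\; g(x,y) \;=\; \E[\varphi_x \varphi_y],
\end{equation*}
giving $\E[\psi^U_x \varphi_y] = 0$. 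Joint Gaussianity then promotes this to independence of $\psi^U$ from $\sigma(\varphi_y : y \in U^c)$; since $\xi^U$ is a linear functional of $\varphi|_{U^c}$, this in particular yields the final ``in particular'' claim that $\psi^U$ and $\xi^U$ are independent.

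For the covariance of $\psi^U$, I would take $x, y \in U$ and exploit $\xi^U_y \in \sigma(\varphi|_{U^c})$ together with the independence just established to reduce to $\E[\psi^U_x \psi^U_y] = \E[\psi^U_x \varphi_y]$; the Green's function decomposition then gives $g(x,y) - E^x[g(X_{T_U},y)] = g_U(x,y)$. The boundary cases where $x$ or $y$ lies in $U^c$ are handled by $\psi^U \equiv 0$ off $U$, which matches $g_U \equiv 0$ there. The only genuinely nontrivial ingredient is the first-exit decomposition of $g$, and even that is a routine strong-Markov computation; every other step is linearity of covariance combined with the Gaussian rigidity fact that uncorrelated implies independent.
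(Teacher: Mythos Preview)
Your argument is correct and is the standard proof of the Gibbs--Markov decomposition. Note that the paper does not actually prove this proposition: it is stated with a reference (``see for instance \cite{SznitmanDisconnection}'') and no proof is given in the text, so there is nothing to compare your approach to beyond observing that your route---defining $\xi^U$ by the harmonic-measure formula, using the first-exit decomposition $g(x,y)=g_U(x,y)+E^x[g(X_{T_U},y)]$, and reducing independence to vanishing covariances via joint Gaussianity---is exactly the classical one found in the cited reference.
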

We introduce integers $L \geq 1$ and $K\geq 100$. We will eventually let $L$ go to infinity and let $K$ be some large fixed constant. Define the lattice 
\begin{align*}
\mathbb L = L\Z^d.
\end{align*} 
For $z \in \mathbb L$, define the boxes
\begin{align*}
C_z = z+ [0,L)^d \subset D_z = z+[-3L,4L)^d \subset U_z = z+[-KL+1,L+KL-1)^d.
\end{align*}
Given a subset $\mathcal C \subset \mathbb L$, the next two lemmas will be used to decouple the fields $\varphi^{U_z},\xi^{U_z}$ for $  z \in \mathcal C$.
\begin{lemma}[{\cite[Lemma 4.1]{SznitmanDisconnection}}] \label{lem: indep-control}
Let $\mathcal C \subset \mathbb L$ be a collection of sites with mutual $\vert \cdot \vert_\infty$-distance at least $(2K+1)L$. Then the fields $\psi^{U_z}$, $z \in \mathcal C$, are independent.
\end{lemma}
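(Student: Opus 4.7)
The plan is to combine the Gibbs--Markov decomposition (Proposition \ref{prop: Gibbs-Markov}) with a direct geometric observation about the boxes $U_z$. The key step is to show that under the separation hypothesis, each $\psi^{U_{z'}}$ (for $z' \in \mathcal{C} \setminus \{z\}$) is measurable with respect to $\sigma(\varphi_y : y \in U_z^c)$; once this is established, the independence stated in Proposition \ref{prop: Gibbs-Markov} will transfer into independence of $\psi^{U_z}$ from the entire collection $(\psi^{U_{z'}})_{z' \neq z}$.

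First I would carry out the geometric step. Writing $U_z = z + [-KL+1, L+KL-1)^d$ as an integer lattice box and $\bar U_{z'} := U_{z'} \cup \partial U_{z'}$, the projection of $U_z$ onto the $i$-th coordinate axis is the integer interval $[z_i - KL + 1, z_i + L + KL - 2]$ while the projection of $\bar U_{z'}$ is contained in $[z'_i - KL, z'_i + L + KL - 1]$. These intervals are disjoint as soon as $|z_i - z'_i| \geq (2K+1)L - 1$, so the hypothesis $|z - z'|_\infty \geq (2K+1)L$ already implies $U_z \cap \bar U_{z'} = \emptyset$, i.e. $\bar U_{z'} \subset U_z^c$.

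Next I would verify that $\psi^{U_{z'}}$ is $\sigma(\varphi_y : y \in \bar U_{z'})$-measurable. Outside $U_{z'}$ the killed Green's function $g_{U_{z'}}$ vanishes, so $\psi^{U_{z'}}$ is identically zero there; inside $U_{z'}$ the identity $\psi^{U_{z'}}_y = \varphi_y - E^y[\varphi_{X_{T_{U_{z'}}}}]$ expresses $\psi^{U_{z'}}_y$ as a deterministic linear combination of $\varphi_y$ together with $\{\varphi_w : w \in \partial U_{z'}\}$, since a nearest-neighbor walk exits $U_{z'}$ on $\partial U_{z'}$. Combined with the geometric step, this shows $\psi^{U_{z'}}$ is $\sigma(\varphi_y : y \in U_z^c)$-measurable. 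Now Proposition \ref{prop: Gibbs-Markov} tells us $\psi^{U_z}$ is independent of $\sigma(\varphi_y : y \in U_z^c)$, hence independent of $\psi^{U_{z'}}$ for each $z' \in \mathcal{C} \setminus \{z\}$.

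For full (mutual) independence, I note that every $\psi^{U_z}_x$ is a deterministic linear combination of values of the Gaussian field $\varphi$, so the family $(\psi^{U_z})_{z \in \mathcal{C}}$ is jointly Gaussian; for jointly Gaussian vectors pairwise independence is equivalent to mutual independence, finishing the argument. The only real subtlety is the bookkeeping in the geometric step: one must remember that $\xi^{U_{z'}}$ depends on the values of $\varphi$ on the outer boundary $\partial U_{z'}$ (not just on $U_{z'}$ itself), and the factor $(2K+1)L$ in the hypothesis is chosen precisely so that this one-site thickening of $U_{z'}$ still avoids $U_z$.
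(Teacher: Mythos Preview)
Your argument is correct and is essentially the standard proof of this fact (and presumably the one in \cite{SznitmanDisconnection}); the paper itself does not give a proof but simply cites that reference. One small remark: your final appeal to Gaussianity is not actually needed, since the stronger fact that $\psi^{U_z}$ is independent of the \emph{entire} collection $(\psi^{U_{z'}})_{z'\in\mathcal C\setminus\{z\}}$ (which you established via measurability in $\sigma(\varphi_y:y\in U_z^c)$) already yields mutual independence by a straightforward induction on $|\mathcal C|$.
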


\begin{lemma}[{\cite[Corollary 4.4]{SznitmanDisconnection}}] \label{lem: harmonic-control}
For all $K \geq 100$ and $a>0$, there exist constants $c = c(K)$ and $c'= c'(K)$ such that
\begin{equation*}
\limsup_{L}\sup_{\mathcal C} \left \{ \log \prob \left[ \bigcap_{z \in \mathcal C}\left\{\inf_{ y \in D_z} \xi_y^{U_z} \leq -a\ \right\} \right] + c'\left( a -c \cdot \sqrt{\frac{\vert \mathcal C \vert}{\operatorname{Cap}(\Sigma)}} \right)_+^2\operatorname{Cap}(\Sigma)\right \} \leq 0
\end{equation*}
where $\Sigma = \bigcup_{z \in \mathcal C} C_z$, and the supremum runs over all $\mathcal C \subset \mathbb L$ with mutual $\vert \cdot \vert_\infty$-distance at least $(2K+1)L$.  We also have for every $z \in \Z^d$
\begin{align*}
\prob\left[\sup_{y \in D_z} \vert \xi^{U_z}_y \vert \geq a \right] \leq 2 \exp \left( - c'L^{d-2}\left(a - \frac{c}{L^{(d-2)/2}} \right)_+^2\right).
\end{align*}
\end{lemma}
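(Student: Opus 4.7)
The plan is to derive both bounds from Gaussian tail estimates applied to well-chosen linear functionals of $\varphi$.

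For the second estimate, I would apply the Borell--TIS concentration inequality to the centered Gaussian field $(\xi^{U_z}_y)_{y \in D_z}$. Since $\xi^{U_z}$ is the $L^2$-projection of $\varphi$ onto $\sigma(\varphi_w: w \in U_z^c)$, its pointwise variance equals $g(y,y)-g_{U_z}(y,y)$. For $y \in D_z$, the strong Markov property and \eqref{eq: green-asymptotic} give that this variance is uniformly of order $L^{2-d}$, since $D_z$ lies at $\vert\cdot\vert_\infty$-distance of order $KL$ from $U_z^c$. Combined with a routine chaining bound showing $\E[\sup_{y \in D_z} |\xi^{U_z}_y|] = O(L^{(2-d)/2})$, this will yield the announced Gaussian tail.

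For the first estimate, the strategy is to dominate the event $\mathcal E := \bigcap_{z \in \mathcal C}\{\inf_{y \in D_z} \xi^{U_z}_y \leq -a\}$ by a Gaussian tail event for a single linear functional with variance $\operatorname{Cap}(\Sigma)$. Writing $\Sigma = \bigsqcup_{z \in \mathcal C} C_z$, I would use the functional
\[
F := \langle \varphi, e_\Sigma \rangle = \sum_{x \in \Sigma} e_\Sigma(x)\varphi_x,
\]
a centered Gaussian with variance exactly $\operatorname{Cap}(\Sigma)$. Applying the Gibbs--Markov decomposition inside each $U_z$ and summing over $z$ splits $F$ into a harmonic part $F_{\mathrm{har}} := \sum_z\sum_{x\in C_z} e_\Sigma(x)\xi^{U_z}_x$ and an independent local part $F_{\mathrm{loc}} := \sum_z\sum_{x\in C_z} e_\Sigma(x)\psi^{U_z}_x$; the independence of the $\psi^{U_z}$'s is granted by Lemma \ref{lem: indep-control}.

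The crux will be to bound $F_{\mathrm{har}}$ on $\mathcal E$. For each $z$, writing $m_z := \inf_{y \in D_z}\xi^{U_z}_y$ and using $C_z \subset D_z$, one has the pointwise upper bound $\xi^{U_z}_x \le m_z + \operatorname{osc}_z$ for $x \in C_z$, where $\operatorname{osc}_z := \sup_{y \in D_z}\xi^{U_z}_y - \inf_{y \in D_z}\xi^{U_z}_y$. On $\mathcal E$ this would give $\xi^{U_z}_x \le -a + \operatorname{osc}_z$, so summing,
\[
F_{\mathrm{har}} \le -a\operatorname{Cap}(\Sigma) + \sum_{z \in \mathcal C} \operatorname{osc}_z \cdot \sum_{x \in C_z} e_\Sigma(x).
\]
The oscillations $\operatorname{osc}_z$ can be controlled uniformly in $z$ by the second estimate, yielding an error term bounded in high probability by $O(\sqrt{|\mathcal C|\operatorname{Cap}(\Sigma)})$; together with the $O(\sqrt{\operatorname{Cap}(\Sigma)})$ fluctuation of $F_{\mathrm{loc}}$, one would obtain $F \le -(a - c\sqrt{|\mathcal C|/\operatorname{Cap}(\Sigma)})\operatorname{Cap}(\Sigma)$ up to negligible probability, and the standard Gaussian tail bound on $F$ would produce the announced exponent. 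The main obstacle will be quantifying the oscillation bound uniformly over $\mathcal C$ with sufficient probability, which will require a careful union bound exploiting the harmonicity of $\xi^{U_z}$ and the near-additivity of capacity across the well-separated boxes $(C_z)_{z \in \mathcal C}$.
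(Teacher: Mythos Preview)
The paper does not prove this lemma; it is quoted from \cite[Corollary~4.4]{SznitmanDisconnection}. Your strategy for the second bound (Borell--TIS on $(\xi^{U_z}_y)_{y\in D_z}$, with pointwise variance and expected supremum both of order $L^{2-d}$) is correct and is essentially how it is done. For the first bound your overall idea---reducing to a Gaussian tail for a linear functional tied to $e_\Sigma$, with variance $\operatorname{Cap}(\Sigma)$---is also the right one and matches Sznitman's approach.

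There is, however, a real gap in your execution of the first estimate. A minor point first: $F_{\mathrm{loc}}$ is \emph{not} independent of $F_{\mathrm{har}}$, since $\xi^{U_z}$ is $\sigma(\varphi_w:w\notin U_z)$-measurable and this $\sigma$-algebra sees $\psi^{U_{z'}}|_{C_{z'}}$ for every $z'\neq z$; but you never actually use this independence, so no harm done. The substantive problem is the phrase ``up to negligible probability.'' If $\mathcal G$ denotes the event on which $F_{\mathrm{loc}}$ and the oscillations are controlled, your argument gives $\mathcal E\cap\mathcal G\subset\{F\le -t\}$ with $t=(a-c\sqrt{|\mathcal C|/\operatorname{Cap}(\Sigma)})\operatorname{Cap}(\Sigma)$, hence $\prob[\mathcal E]\le e^{-t^2/(2\operatorname{Cap}(\Sigma))}+\prob[\mathcal G^c]$. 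But $\operatorname{Var}(F_{\mathrm{loc}})$ is of order $\operatorname{Cap}(\Sigma)$, so $\prob\bigl[|F_{\mathrm{loc}}|>c\sqrt{|\mathcal C|\operatorname{Cap}(\Sigma)}\bigr]$ is only of order $e^{-c|\mathcal C|}$; since $\operatorname{Cap}(\Sigma)$ can be as large as $c|\mathcal C|L^{d-2}$, this error term swamps the target bound as $L\to\infty$. The same difficulty afflicts any union bound on the $\operatorname{osc}_z$.

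The repair---and this is Sznitman's argument---is to avoid splitting $F$ and instead observe that, \emph{deterministically}, $\mathcal E\subset\{W\ge a\}$ where
\[
W \;=\; \sup_{(y_z)\in\prod_z D_z}\Bigl(-\sum_{z\in\mathcal C}\mu_z\,\xi^{U_z}_{y_z}\Bigr)
\]
for suitable convex weights $\mu_z$ (the natural choice being $\mu_z\propto e_\Sigma(C_z)$). A single Borell--TIS application to this supremum gives $\prob[W\ge a]\le\exp\bigl(-(a-\E W)_+^2/(2\sigma^2)\bigr)$, with $\sigma^2$ the maximal variance over the index set. One is then left with two deterministic estimates: (i) $\sigma^2\le C/\operatorname{Cap}(\Sigma)$, obtained from a covariance computation using the separation of the $U_z$ and the choice of weights, and (ii) $\E W\le c\sqrt{|\mathcal C|/\operatorname{Cap}(\Sigma)}$, which is precisely the expectation version of your oscillation estimate. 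No residual ``high-probability'' term arises, and the $\limsup_L$ in the statement is there to absorb (ii) asymptotically.
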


\subsection{Connectivity Estimates}
We recall connective properties of the percolation of the level sets of $\varphi$. 
\begin{definition}
Denote $\chi$ to be either $\psi^{U_z}$ or $\varphi$. For $L\geq 1$ and $h_1,h_2 \in \R$ such that $h_1\leq h_2$, define $\text{LocUniq}(\chi, z,h_1,h_2)$ as the intersection of the events
\begin{equation*}
\text{Exist}(\chi,z,h_2) = \left\{ \begin{alignedat}{2} \begin{gathered} \text{there exists a connected component in} \\[.25cm]  \{\chi \geq h_2 \}\cap C_z \: \text{with diameter at least} \: L/10 \end{gathered} \end{alignedat} \right \}
\end{equation*}
and 
\begin{equation*}
\text{Unique}(\chi, z,h_1,h_2) = \left\{ \begin{alignedat}{2} \begin{gathered} \text{for any } x \in \mathbb L \text{ with } \vert z - x \vert_1 = L\text{ , any connected clusters in}  \\[.25cm]  \{\chi \geq h_2 \}\cap C_x \: \text{and } \{\chi \geq h_2 \}\cap C_z \: \text{having diameter at least} \: L/10  \\[.25cm] \text{are connected to each other in} \:    \{\chi \geq h_1 \}\cap D_z \end{gathered} \end{alignedat} \right \}.
\end{equation*}
\end{definition}
\begin{lemma}[\cite{DrewitzRathSapozhnikovChemical},\cite{EqualityParametersGFF}] \label{lem: local-unique}
 For $h_1 \leq h_2< h_{*}$, there exists constants $c_1 = c_1(h_1,h_2)$ and $c=c(h_1,h_2)$ such that for any $z \in \Z^d$
\begin{align*}
\prob[\operatorname{LocUniq}(\varphi,z,h_1,h_2)] > 1-  e^{-c L^{c_1}} .
\end{align*}
\end{lemma}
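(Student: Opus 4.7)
The plan is to establish the events $\operatorname{Exist}(\varphi, z, h_2)$ and $\operatorname{Unique}(\varphi, z, h_1, h_2)$ separately, each with probability at least $1 - e^{-cL^{c_1}}$, and then conclude by a union bound over the $2d$ neighbors of $z$ in $\mathbb L$.

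For the existence event, I would first pick an intermediate height $h' \in (h_2, h_*)$ so that the percolation density $\theta(h') = \prob[0 \xleftrightarrow{\geq h'} \infty]$ is strictly positive. Covering $C_z$ by $M^d$ well-separated sub-boxes of side $L/M$ for a large constant $M$, it suffices to show that with very high probability at least one sub-box contains a point in a cluster of $\{\varphi \geq h_2\} \cap C_z$ of diameter $\geq L/10$. Each sub-box event has marginal probability bounded below by a constant depending on $\theta(h')$ together with \eqref{eq: sharp-transition}, which allows me to replace ``connected to the infinite cluster at height $h'$'' by the local event ``connected to distance $L/10$ at height $h_2$''. To turn this into the stretched-exponential bound, I would decouple the sub-box events using the Gibbs--Markov decomposition of Proposition \ref{prop: Gibbs-Markov}, controlling the harmonic averages via Lemma \ref{lem: harmonic-control}; this reduces the estimate to a product of essentially independent constant-probability events.

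For the uniqueness event, fix $x \in \mathbb L$ with $|z - x|_1 = L$, and consider two clusters $\mathcal K_z, \mathcal K_x$ of diameter at least $L/10$ in $\{\varphi \geq h_2\}$ lying in $C_z$ and $C_x$ respectively. The strategy is to exhibit, with overwhelming probability, a single dominant connected component of $\{\varphi \geq h_1\} \cap D_z$ that contains both $\mathcal K_z$ and $\mathcal K_x$. This proceeds in two steps: the existence argument applied inside $D_z$ produces a macroscopic crossing of $D_z$ at height $h_1$ with very high probability, and a merging step shows that every local cluster of diameter $\geq L/10$ in $C_z$ or $C_x$ must connect to this crossing. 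The merging step combines the strong-supercriticality estimate \eqref{eq: sharp-transition}, the FKG inequality applied to the (increasing) connection events in $\{\varphi \geq h_1\}$, and the decoupling afforded by Proposition \ref{prop: Gibbs-Markov}.

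The hardest part is the two-arm estimate implicit in the merging step: ruling out the coexistence of two disjoint macroscopic clusters in $\{\varphi \geq h_1\} \cap D_z$ that never meet. This is precisely the content of the multi-scale renormalization scheme carried out in \cite{DrewitzRathSapozhnikovChemical} and refined in \cite{EqualityParametersGFF}, and I would invoke it as a black box rather than redo the renormalization here. Once the bounds on $\operatorname{Exist}$ and $\operatorname{Unique}$ are in hand, a union bound over the $2d$ neighbors $x$ of $z$ in $\mathbb L$ yields the claimed estimate on $\prob[\operatorname{LocUniq}(\varphi, z, h_1, h_2)]$, with constants $c, c_1$ depending on $h_1, h_2$ through the choice of the intermediate height $h'$ and the constants in \eqref{eq: sharp-transition}.
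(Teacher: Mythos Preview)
The paper does not supply a proof of this lemma: it is stated with citations to \cite{DrewitzRathSapozhnikovChemical} and \cite{EqualityParametersGFF} and used as a black box. So there is no in-paper argument to compare your proposal against.

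Your sketch is a reasonable outline of how such local-uniqueness results are typically obtained, and you yourself concede that the crucial two-arm/coexistence estimate is exactly what those references prove via multi-scale renormalization and must be invoked as a black box. In that sense your ``proof'' collapses to the same citation the paper already gives. A couple of minor points on the sketch itself: your existence argument as written only produces a vertex in some sub-box connected to distance $L/10$, which is not quite the same as a cluster of diameter $\geq L/10$ lying \emph{inside} $C_z$; one needs a crossing-type event rather than a one-arm event there. Also, be careful about apparent circularity in presentation: you invoke Proposition~\ref{prop: Gibbs-Markov} and Lemma~\ref{lem: harmonic-control} from the present paper to prove a lemma that the paper imports from earlier literature, whereas logically those decoupling tools (or their analogues) are inputs to \cite{DrewitzRathSapozhnikovChemical,EqualityParametersGFF} rather than consequences of this paper.
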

The next result, which was already mentioned in the introduction, will be our initial estimate for controlling the chemical distance.
\begin{lemma}[\cite{DrewitzRathSapozhnikovChemical},\cite{EqualityParametersGFF}]\label{lem: initial-chemical-bound}
For $h_1\leq h_2<h_*$, there exists $C_1 = C_1(h_1,h_2)$, $c=c(h_1,h_2)$, and $c_2 = c_2(h_1,h_2)$ such that for any $z \in \Z^d$
\begin{align*}
 \prob[\forall x,y \in \mathcal S_{L/10}(h_2) \cap D_z, \: \rho_{h_1}(x,y) \leq C_1 L] >1- e^{-c (\log L)^{1+c_2}}.
\end{align*}
\end{lemma}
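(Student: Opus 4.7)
The plan is to reduce this lemma to the chemical-distance estimate of \cite{DrewitzRathSapozhnikovChemical} and \cite{EqualityParametersGFF}, of which it is essentially a localized quantitative restatement. By translation invariance of $\varphi$ I may assume $z = 0$, so that $D_z \subset B_{4L}$.

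The first step is to reduce to the one-parameter case $h_1 = h_2$. Since $h_1 \leq h_2$ gives $\{\varphi \geq h_2\} \subseteq \{\varphi \geq h_1\}$, every connected component of $\{\varphi \geq h_2\}$ of diameter at least $L/10$ is contained in a connected component of $\{\varphi \geq h_1\}$ of diameter at least $L/10$. Hence $\mathcal S_{L/10}(h_2) \subseteq \mathcal S_{L/10}(h_1)$, and it suffices to establish
\begin{align*}
\prob\left[\forall x,y \in \mathcal S_{L/10}(h_1) \cap D_z, \: \rho_{h_1}(x,y) \leq C_1 L \right] > 1 - e^{-c(\log L)^{1+c_2}}.
\end{align*}

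This is precisely the output of the multiscale renormalization of \cite{DrewitzRathSapozhnikovChemical}, which produces, with the desired stretched-exponential probability, a chain of good sub-boxes at scales between $L/10$ and $L$ linking any two macroscopic $h_1$-clusters via a nearest-neighbor path in $\{\varphi \geq h_1\}$ of total length $O(L)$. Alternatively, within the tools already introduced in this paper, I would cover $D_z$ by an $\ell$-sub-lattice for $\ell = (\log L)^{2/c_1}$, apply Lemma \ref{lem: local-unique} with parameters $(h_1, h_1)$ at each sub-site, and chain macroscopic pieces across sub-boxes; a union bound over the $(L/\ell)^d$ sites would contribute at most $(L/\ell)^d e^{-c\ell^{c_1}} \leq e^{-c'(\log L)^2}$ to the failure probability, giving a valid $c_2 = 1$.

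The main obstacle in this alternative approach is the geometric step of showing that every connected component of $\{\varphi \geq h_1\}$ in $D_z$ of diameter at least $L/10$ contains, in each of a sequence of $O(L/\ell)$ adjacent sub-boxes, a piece of diameter at least $\ell/10$ to which the $\operatorname{Unique}$ events can be applied. This is standard but requires a short covering argument; once in hand, chaining the $\operatorname{Unique}(\varphi, y, h_1, h_1)$ events along the sequence produces the linear-length $h_1$-path and completes the proof.
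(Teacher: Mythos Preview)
Your main route---reduce to the single height $h_1=h_2$ via the inclusion $\mathcal S_{L/10}(h_2)\subset\mathcal S_{L/10}(h_1)$ and then invoke the chemical-distance bound of \cite{DrewitzRathSapozhnikovChemical,EqualityParametersGFF}---is correct and is exactly how the paper treats this lemma: it is stated as a citation, with no proof given. Your reduction makes the citation transparent.

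The alternative in-paper argument, however, has a more serious defect than the geometric covering issue you flag. Even granting that every macroscopic $h_1$-cluster in $D_z$ deposits a piece of diameter $\geq\ell/10$ in each of $O(L/\ell)$ adjacent $\ell$-sub-boxes, chaining the events $\operatorname{Unique}(\varphi,y,h_1,h_1)$ only guarantees that consecutive pieces are \emph{connected} in the $7\ell$-box $D_y$; it gives no control on the length of the connecting path beyond the trivial volume bound $|D_y|=O(\ell^d)$. Summing over $O(L/\ell)$ sub-boxes yields a path of length $O(L\ell^{d-1})$, not $O(L)$. This is precisely why \cite{DrewitzRathSapozhnikovChemical} needs a genuine multiscale scheme (propagating a chemical-distance bound through the scales, not just a connectivity event) rather than a single-scale application of $\operatorname{LocUniq}$. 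In short, Lemma~\ref{lem: local-unique} alone is not strong enough to bootstrap the linear chemical distance; you really do need the cited result as a black box.
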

\section{Renormalization} \label{sec: renormalization}
In this section, we setup a renormalization argument, which we will use to prove the upper bound in Theorem \ref{thm: chem-bound}.
\subsection{Good and Bad Boxes}
We define notion of good and bad vertices in $\mathbb L$.
\begin{definition}
For $\varepsilon>0$, we say $z \in \mathbb L$ is $\xi$-good at level $\varepsilon$ if
\begin{align*}
\inf_{x \in D_z}\xi_x^{U_z} > -\varepsilon.
\end{align*}
Else, we say $z$ is $\xi$-bad at level $\varepsilon$.
\end{definition}
Denote $\mathcal S^\psi_{z}  (h) $ to be connected components in $\{x \in \Z^d: \psi^{U_z}_x \geq h \}$ with diameter at least $L/10$. We define the chemical distance with respect to the field $\psi^{U_z}$: for $x,y \in \{w \in \Z^d: \psi^{U_z}_w \geq h \}$,
\begin{align*}
 \eta_{z,h}(x,y) = \inf\{n \in \N: \exists z_1,\ldots, z_n \in \mathcal S^\psi_{z}(h)\: \text{s.t.} \: \vert z_{i+1}-z_i \vert_1 = 1, z_1 = x, z_n = y \}.
\end{align*}

\begin{definition}
For $h_1\leq h_2$, we say $z \in \mathbb L$ is $\psi$-good at level $(h_1,h_2)$ if the event
\begin{align*}
\text{LocUniq}(\psi^{U_z},z,h_1,h_2) \cap \{\forall x,y \in \mathcal S^\psi_{z}(h_2) \cap D_z, \: \eta_{z,h_1}(x,y) \leq C_1 L \}
\end{align*}
occurs. Else, we say $z$ is $\psi$-bad at level $(h_1,h_2)$.
\end{definition}
Finally, we will define a notion of a vertex being bad with respect to both the independent field and the harmonic part. 
\begin{definition}
We say $z \in \mathbb L$ is good at level $(\epsilon, h_1,h_2)$ if it is both $\xi$-good at level $\varepsilon$ and $\psi$-good at level $(h_1,h_2)$. Else, we say it is bad at level $(\epsilon, h_1,h_2)$.
\end{definition}
\subsection{Bounding the number of bad boxes}
The main result of this section is the following proposition, which estimates the number of bad vertices in a large box. We let $N \in \mathbb N$, which we will eventually take to infinity, and fix $h_1<h_2 < h_*$ and $\epsilon >0$. Denote $\mathcal B_N =\mathcal B_N(\epsilon,h_1,h_2)= \{x \in \mathbb L \cap B_N: x \text{ is } (\epsilon,h_1,h_2) \text{-bad} \}$.

\begin{proposition} \label{prop: bad-box}
There exist constants $c = c(K)$, $c'= c'(K)$ and $C = C(K)$ such that for $L < N$ and $m \in \N$ satisfying $m^{2/d}/L^{d-2} \leq c'\varepsilon^2 $, we have
\begin{align*}
\prob[\vert \mathcal{B}_N(\epsilon,h_1,h_2) \vert \geq m] \leq \exp(C m \log N-c m (\log L)^{1+c_2})+ \exp(C m \log N-c \varepsilon^2 m^{1-2/d}L^{d-2}).
\end{align*}
\end{proposition}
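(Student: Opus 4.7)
The plan is to decompose a bad vertex as either $\psi$-bad or $\xi$-bad: since $\vert \mathcal B_N(\varepsilon,h_1,h_2) \vert \leq \vert \mathcal B_N^\psi \vert + \vert \mathcal B_N^\xi \vert$ (where $\mathcal B_N^\psi$, $\mathcal B_N^\xi$ collect the $\psi$- and $\xi$-bad vertices of $\mathbb L \cap B_N$), a union bound reduces the problem to estimating $\prob[\vert \mathcal B_N^\psi \vert \geq m/2]$ and $\prob[\vert \mathcal B_N^\xi \vert \geq m/2]$ separately, which should produce the first and second terms on the right-hand side. For each of these events I would first perform a standard sparsification: partition $\mathbb L$ into $(2K+1)^d$ sublattices, each with mutual $\vert \cdot \vert_\infty$-distance $(2K+1)L$, and apply pigeonhole to extract from the putative bad set a subcollection $\mathcal C$ with $m' := \lceil m/(2(2K+1)^d) \rceil$ sites lying in a single sublattice. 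Enumerating such $\mathcal C$ costs at most $\binom{\vert \mathbb L \cap B_N \vert}{m'} \leq (3N/L)^{dm'} \leq \exp(Cm \log N)$, which accounts for the common prefactor in both error terms.

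For the $\psi$-bad part, Lemma \ref{lem: indep-control} gives that $\{\psi^{U_z}\}_{z \in \mathcal C}$ are independent, while Lemmas \ref{lem: local-unique} and \ref{lem: initial-chemical-bound} combined bound $\prob[z \text{ is } \psi\text{-bad at level } (h_1,h_2)] \leq e^{-c(\log L)^{1+c_2}}$ per site. Multiplying over $\mathcal C$ and combining with the union bound yields $\exp(Cm \log N - c m (\log L)^{1+c_2})$, which is the first term of the proposition.

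For the $\xi$-bad part, I would apply Lemma \ref{lem: harmonic-control} to the fixed collection $\mathcal C$ with $a = \varepsilon$. The quantitative input is a lower bound on $\operatorname{Cap}(\Sigma)$ where $\Sigma = \bigcup_{z \in \mathcal C} C_z$. Since the boxes $C_z$ are pairwise disjoint, $\vert \Sigma \vert = m' L^d$, and the classical $\Z^d$ isocapacitary inequality $\operatorname{Cap}(A) \geq c \vert A \vert^{(d-2)/d}$ gives $\operatorname{Cap}(\Sigma) \geq c (m')^{1-2/d} L^{d-2}$. With this, the correction $c\sqrt{\vert \mathcal C \vert / \operatorname{Cap}(\Sigma)}$ inside Lemma \ref{lem: harmonic-control} is at most a constant times $\sqrt{(m')^{2/d}/L^{d-2}}$, which is bounded by $\varepsilon/2$ thanks to the hypothesis $m^{2/d}/L^{d-2} \leq c'\varepsilon^2$ (for $c'$ small enough). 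The positive part is then at least $\varepsilon/2$, and Lemma \ref{lem: harmonic-control} produces $\exp(-c \varepsilon^2 (m')^{1-2/d} L^{d-2})$. Summing over $\mathcal C$ gives the second term $\exp(Cm \log N - c\varepsilon^2 m^{1-2/d} L^{d-2})$.

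The main obstacle is precisely the calibration of the capacity estimate: one needs a lower bound on $\operatorname{Cap}(\Sigma)$ which both grows with $m'$ (to make the tail of Lemma \ref{lem: harmonic-control} useful) and is compatible with the assumed scaling $m^{2/d}/L^{d-2} \leq c'\varepsilon^2$, so that the correction term is absorbed. The isocapacitary lower bound $(m')^{1-2/d} L^{d-2}$ is exactly the right order, and this is what dictates the form of the hypothesis in the proposition; the remaining steps are Gaussian tail bounds, independence, and counting.
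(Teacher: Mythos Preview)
Your overall strategy matches the paper's: split into $\psi$-bad and $\xi$-bad, sparsify via pigeonhole on the $(2K+1)^d$ cosets of $(2K+1)L\,\Z^d$, and then use independence (for $\psi$) and Lemma~\ref{lem: harmonic-control} together with a capacity lower bound (for $\xi$). The paper packages the $\psi$-part as Lemma~\ref{lem: indep-bad} and the $\xi$-part as Lemma~\ref{lem: harmonic-bad}, with the capacity input isolated as Lemma~\ref{lem: capacity}. Your use of the isocapacitary inequality $\operatorname{Cap}(A)\ge c\,|A|^{(d-2)/d}$ in place of the paper's direct Green's-function computation is a legitimate (and slightly slicker) way to get $\operatorname{Cap}(\Sigma)\ge c\,(m')^{1-2/d}L^{d-2}$; the paper instead bounds $\max_x\sum_{y\in\Sigma} g(x,y)$ by slicing into annuli.

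There is, however, a genuine gap in your $\psi$-bad estimate. Lemmas~\ref{lem: local-unique} and~\ref{lem: initial-chemical-bound} are stated for the full field $\varphi$, not for the local field $\psi^{U_z}$, so they do not immediately give $\prob[z\text{ is }\psi\text{-bad}]\le e^{-c(\log L)^{1+c_2}}$. One must transfer these estimates from $\varphi$ to $\psi^{U_z}$ by a sprinkling argument: on the event $A_\delta=\{\sup_{D_z}|\xi^{U_z}|\le\delta\}$ one has the sandwich $\{\varphi\ge h'+\delta\}\cap D_z\subset\{\psi^{U_z}\ge h'\}\cap D_z\subset\{\varphi\ge h'-\delta\}\cap D_z$, which converts LocUniq and chemical-distance events for $\psi^{U_z}$ at levels $(h_1,h_2)$ into the corresponding events for $\varphi$ at slightly perturbed levels $(h_1+\delta,h_2-\delta)$ still below $h_*$; the complement $A_\delta^c$ is handled by the second display of Lemma~\ref{lem: harmonic-control}. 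This is precisely the content of the paper's proof of Lemma~\ref{lem: indep-bad}, and without it your per-site bound is unjustified.
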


To prove this proposition, we will first estimate the probability of any fixed configuration of vertices in $\mathbb L$ are either $\psi$-bad or $\xi$-bad. 

\begin{lemma}\label{lem: indep-bad}
Suppose $\mathcal C \subset \mathbb L $ is a set of points with mutual distance at least $(2K+1)L$. There exists $c=c(\varepsilon,h)$ independent of $\mathcal C$ such that
\begin{align*}
\prob[\forall z \in \mathcal C, \text{ $z$ is $\psi$-bad at level $(h_1,h_2)$}] \leq  \exp \left (-c\cdot  \vert \mathcal C \vert  (\log L)^{1+c_2} \right ).
\end{align*}
\end{lemma}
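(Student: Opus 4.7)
The plan is to exploit independence together with a single-box bound. By Lemma~\ref{lem: indep-control}, since the points of $\mathcal C$ have mutual $|\cdot|_\infty$-distance at least $(2K+1)L$, the fields $\{\psi^{U_z}\}_{z\in\mathcal C}$ are jointly independent. The event that $z$ is $\psi$-bad at level $(h_1,h_2)$ is measurable with respect to $\psi^{U_z}$ (in fact only its values on $D_z\subset U_z$), so these events are jointly independent as well. Therefore
\begin{align*}
\prob\bigl[\forall z \in \mathcal C,\; z \text{ is $\psi$-bad at level }(h_1,h_2)\bigr] \;=\; \prod_{z\in\mathcal C}\prob\bigl[z \text{ is $\psi$-bad at level }(h_1,h_2)\bigr],
\end{align*}
so it suffices to establish the one-box bound $\prob[z\text{ is $\psi$-bad}]\leq e^{-c(\log L)^{1+c_2}}$ uniformly in $z$.

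Unwinding the definition, $\{z\text{ is $\psi$-bad at level }(h_1,h_2)\}$ equals the union of the complement of $\mathrm{LocUniq}(\psi^{U_z},z,h_1,h_2)$ and the event that some pair of points in $\mathcal S^\psi_z(h_2)\cap D_z$ fails to be joined by a path of length at most $C_1 L$ in $\{\psi^{U_z}\geq h_1\}$. These are precisely the complements of the events controlled in Lemma~\ref{lem: local-unique} and Lemma~\ref{lem: initial-chemical-bound}, the only difference being that the ambient field is $\psi^{U_z}$ instead of $\varphi$.

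The main obstacle is this transfer from $\varphi$ to $\psi^{U_z}$ inside $D_z$. Since $\mathrm{dist}(D_z,\partial U_z)$ is of order $KL$, the harmonic corrector $\xi^{U_z}=\varphi-\psi^{U_z}$ is uniformly small on $D_z$: by the second part of Lemma~\ref{lem: harmonic-control}, $\sup_{y\in D_z}|\xi^{U_z}_y|\leq\delta$ with probability $1-o(1)$ for any fixed $\delta>0$ (provided $K$ is chosen large enough and $L$ is large). On this favorable event, the level sets $\{\psi^{U_z}\geq h_i\}\cap D_z$ are sandwiched between $\{\varphi\geq h_i-\delta\}\cap D_z$ and $\{\varphi\geq h_i+\delta\}\cap D_z$; choosing $\delta>0$ so small that $h_2+\delta<h_*$, the analogues of Lemmas~\ref{lem: local-unique} and~\ref{lem: initial-chemical-bound} for $\psi^{U_z}$ follow from the corresponding estimates applied to $\varphi$ at the shifted heights $h_1\pm\delta,h_2\pm\delta$. (Alternatively, the multiscale proofs in \cite{DrewitzRathSapozhnikovChemical,EqualityParametersGFF} go through verbatim for $\psi^{U_z}$ in $D_z$, the needed covariance bounds for $\psi^{U_z}$ being inherited from those of $\varphi$ up to $K$-dependent constants.)

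Combining the single-box estimate with the product structure yields
\begin{align*}
\prob\bigl[\forall z\in\mathcal C,\; z \text{ is $\psi$-bad at level }(h_1,h_2)\bigr] \;\leq\; e^{-c\,|\mathcal C|(\log L)^{1+c_2}},
\end{align*}
as claimed, with $c=c(h_1,h_2)$ independent of $\mathcal C$.
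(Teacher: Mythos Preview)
Your proof is correct and follows essentially the same route as the paper: use Lemma~\ref{lem: indep-control} to factorize over $\mathcal C$, then bound the single-box probability by coupling $\psi^{U_z}$ to $\varphi$ on $D_z$ via the event $\{\sup_{D_z}|\xi^{U_z}|\leq\delta\}$ (controlled by Lemma~\ref{lem: harmonic-control}), sandwiching level sets at heights $h_i\pm\delta$, and invoking Lemmas~\ref{lem: local-unique} and~\ref{lem: initial-chemical-bound} at the shifted heights. Two minor points: the clause ``provided $K$ is chosen large enough'' is unnecessary, since Lemma~\ref{lem: harmonic-control} already gives $\prob[\sup_{D_z}|\xi^{U_z}|>\delta]\leq e^{-c\delta^2 L^{d-2}}$ for any fixed $K\geq 100$; and you should state this quantitative bound rather than merely ``$1-o(1)$'', since it is what allows the harmonic-failure term to be absorbed into $e^{-c(\log L)^{1+c_2}}$.
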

\begin{proof}
By Lemma \ref{lem: indep-control} and the definition of being $\psi$-bad, the events $\{z \text{ is } \psi\text{-bad at level }(h_1,h_2) \}$ for $z \in \mathcal C$ are independent. By the union bound,
\begin{equation}
\begin{aligned} \label{eq: indep-bad-box}
\prob[z \text{ is } \psi\text{-bad at level }(h_1,h_2)] & \leq \prob[\text{LocUniq}(\psi^{U_z},z,h_1,h_2)^c]\\
&+\prob[ \exists x,y \in \mathcal S^\psi_{z}(h_2) \cap D_z, \: \eta_{z,h_1}(x,y) > C_1 L ],
\end{aligned}
\end{equation}
and so we need to bound both terms on the right hand side. By translation invariance of $\prob$, we only need to consider the case where $z$ is the origin. Let
\begin{align*}
\delta = \frac{(h_2-h_1) \wedge (h_*-h_2)}{4},
\end{align*}
which satisfies 
\begin{align}\label{eq: height-satisfaction}
h_1+\delta<h_2 - \delta,\qquad  h_2 + \delta < h_*.
\end{align}
We make the following observation: on the event 
\begin{align*}
A_\delta = \{ \sup_{x \in D_0} \vert \varphi_x-\psi_x^{U_0} \vert \leq \delta \},
\end{align*}
 we have for any $h' \in \R$
\begin{align} \label{eq: sprinkle-inclusion}
\{x \in D_0 : \varphi_x \geq h'+\delta \} \subset \{x \in D_0: \psi^{U_0}_x \geq h' \} \subset \{x \in D_0: \varphi_x\geq h'-\delta \} 
\end{align}
We bound the first term on the right hand side of \eqref{eq: indep-bad-box}. From the union bound and Lemma \ref{lem: harmonic-control}, we have
\begin{equation}\label{eq: loc-unique-loc-field}
\begin{aligned}
\prob[\text{LocUniq}(\psi^{U_0},0,h_1,h_2)^c] &\leq \prob[\text{Exist}(\psi^{U_0},0,h_1,h_2)^c, A_\delta ]\\
&+\prob[\text{Unique}(\psi^{U_0},0,h_1,h_2)^c,A_\delta] \\
&+ \exp\left( -c\delta^2L^{d-2} \right).
\end{aligned}
\end{equation}
Since $\text{Exist}(\chi,0,h_2)$ is decreasing in $h_2$, by \eqref{eq: sprinkle-inclusion} we have
\begin{align*}
\prob[\text{Exist}(\psi^{U_0},0,h_2)^c,A_\delta] \leq \prob[\text{Exist}(\varphi,0,h_2+\delta)^c] \leq e^{-cL^{c_1}},
\end{align*}
where the last inequality follows from Lemma \ref{lem: local-unique} and \eqref{eq: height-satisfaction}. 

To bound the second term on the right hand side of \eqref{eq: loc-unique-loc-field}, observe that on $A_\delta$, the first inclusion in \eqref{eq: sprinkle-inclusion} implies that for $x,y \in \Z^d$
\begin{align}\label{eq: local-field-inclusion}
\{x \xleftrightarrow{\varphi \geq h_1 + \delta} y \} \subset \{x \xleftrightarrow{ \psi^{U_0}\geq h_1} y \},
\end{align} 
while the second inclusion implies $\mathcal S_0^\psi(h_2) \subset \mathcal S_{L/10}(h_2-\delta) $. 
Hence on the event $A_\delta$
\begin{align*}
\text{Unique}(\varphi,0,h_1 + \delta,h_2-\delta)  \subset \text{Unique}(\psi^{U_0},0,h_1,h_2),
\end{align*}
and so by Lemma \ref{lem: local-unique} and \eqref{eq: height-satisfaction} we have
\begin{align*}
\prob[\text{Unique}(\psi^{U_0},0,h_1,h_2)^c] \leq  e^{-cL^{c_1}}.
\end{align*}
Thus $\prob[\text{LocUniq}(\psi^{U_0},0,h_1,h_2)^c] \leq e^{-cL^{c_1}}$.

We now bound the second term on the right hand side of \eqref{eq: indep-bad-box}. On the event $A_\delta$ we have $\eta_{0,h_1}(x,y) \leq \rho_{h_1+\delta}(x,y)$ by \eqref{eq: local-field-inclusion}, and also $\mathcal S_0^\psi(h_2) \subset \mathcal S_{L/10}(h_2-\delta) $. Hence on $A_\delta$ we have
\begin{align*}
\{\exists x,y \in \mathcal S_0^\psi(h_2) \cap D_0,\eta_{0,h_1}(x,y) > C_1L \}  \subset \{\exists x,y \in \mathcal S_{L/10}(h_2-\delta)\cap D_x,\rho_{h_1+\delta}(x,y) > C_1L \}
\end{align*}
and so 
\begin{align*}
&\prob[ \exists x,y \in \mathcal S^\psi_{0}(h_2) \cap D_x, \: \eta_{x,h_1}(x,y) > C_1 L  ] \\
&\leq \prob[\sup_{y \in D_0} \vert \varphi_y-\psi_y^{U_0} \vert> \delta] +  \prob[\exists x,y \in \mathcal S_{L/10}(h_2-\delta) \cap C_x, \: \rho_{h_1+\delta}(x,y) > C_1 L].
\end{align*}
By Lemmas  \ref{lem: harmonic-control} and \ref{lem: initial-chemical-bound}, and \eqref{eq: height-satisfaction} we have
\begin{align*}
\prob[\exists x,y \in \mathcal S_{L/10}(h_2-\delta) \cap C_x, \: \rho_{h_1+\delta}(x,y) > C_1 L]  \leq\exp(-cL^{d-2})+ \exp(-c (\log L)^{1+c_2}) ,
\end{align*}
which finishes the proof.
\end{proof}

Before we prove the equivalent result for the harmonic components, we will need the following lower bound on the capacity of separated boxes. 

\begin{lemma} \label{lem: capacity}
Let $m \in \N$, and suppose $\{ z_1,\ldots, z_m\} \subset\mathbb L $ is a subset of points at mutual distance at least $(2K+1)L$. Then there exists $c_3 = c_3(K)$ independent of $L$ and $m$ such that
\begin{align*}
\operatorname{Cap}\left( \cup_{i=1}^m C_{z_i}\right) \geq c_3\cdot m^{1-2/d}L^{d-2}.
\end{align*}
\end{lemma}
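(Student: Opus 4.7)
The natural tool here is the variational (Dirichlet) characterization of capacity: for any nonnegative measure $\mu$ on a finite set $A \subset \Z^d$,
\[
\operatorname{Cap}(A) \;\geq\; \frac{\mu(A)^2}{\sum_{x,y \in A} g(x,y)\,\mu(x)\mu(y)},
\]
which follows from the identity $\operatorname{Cap}(A)^{-1} = \inf_\nu \sum_{x,y} g(x,y)\,\nu(x)\nu(y)$ over probability measures $\nu$ on $A$. The plan is to apply this with $\mu$ equal to the counting measure on $\Sigma := \bigcup_{i=1}^m C_{z_i}$ (so $\mu(\Sigma) = m L^d$), thereby reducing the lemma to proving the upper bound
\[
\sum_{x,y \in \Sigma} g(x,y) \;\leq\; C\, m^{1+2/d} L^{d+2},
\]
which then gives $\operatorname{Cap}(\Sigma) \geq c_3\, m^{1-2/d} L^{d-2}$.

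I would split this double sum into diagonal (same box) and off-diagonal (different box) contributions. Using \eqref{eq: green-asymptotic} together with $|\{y : |x-y|_\infty = k\}| \leq C k^{d-1}$, I obtain $\sum_{y \in C_{z_i}} g(x,y) \leq C L^2$ for every $x \in C_{z_i}$, so the diagonal contribution is bounded by $C m L^{d+2}$. For the off-diagonal contribution, the mutual separation $|z_i - z_j|_\infty \geq (2K+1)L$ (with $K \geq 100$) forces $|x-y|_\infty \geq \tfrac{1}{2}|z_i - z_j|_\infty$ whenever $x \in C_{z_i},\ y \in C_{z_j},\ i \neq j$, hence $g(x,y) \leq C |z_i - z_j|_\infty^{2-d}$. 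This reduces matters to the combinatorial bound
\[
\sum_{i \neq j} |z_i - z_j|_\infty^{2-d} \;\leq\; C L^{2-d}\, m^{1+2/d}.
\]

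To prove the combinatorial bound, after rescaling $\tilde z_i := z_i/L$ (a $(2K+1)$-separated subset of $\Z^d$), I would show $\sum_{j \neq i} |\tilde z_i - \tilde z_j|_\infty^{2-d} \leq C m^{2/d}$ for every fixed $i$. Decomposing into dyadic shells $\{2^k \leq |\tilde z_i - \tilde z_j|_\infty < 2^{k+1}\}$, the separation property bounds the cardinality of the $k$-th shell by $C 2^{kd}$, while the global count bounds it by $m$. Balancing at the crossover scale $2^{k_0} \asymp m^{1/d}$ gives a bulk contribution $\sum_{k \leq k_0} C 2^{kd} \cdot 2^{k(2-d)} = O(m^{2/d})$ and a tail contribution $\sum_{k > k_0} m \cdot 2^{k(2-d)} = O(m \cdot m^{(2-d)/d}) = O(m^{2/d})$, where the tail converges geometrically since $d \geq 3$. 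Summing over $i$ yields the claim. The main obstacle — essentially the only nontrivial step — is this packing estimate; the exponent $1-2/d$ reflects the fact that $m$ separated points in $\Z^d$ must occupy a region of linear size $\gtrsim m^{1/d}$, and this geometric constraint, balanced against the $|\cdot|^{2-d}$ decay of $g$, produces precisely the desired scaling.
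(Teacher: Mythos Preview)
Your proposal is correct and follows essentially the same approach as the paper: both use the variational lower bound for capacity with the uniform measure on $\Sigma$ (the paper cites this as $\operatorname{Cap}(U) \geq |U|/\max_{x}\sum_{y} g(x,y)$), and both control the resulting Green's function sum by an annular decomposition together with a packing bound on the number of separated $z_i$'s in each shell. The only cosmetic differences are that the paper works with linear shells of width $(2K+1)L$ and invokes a ``without loss of generality $U\subset B_{Cm^{1/d}L}$'' reduction to truncate the sum, whereas your dyadic crossover argument handles the same packing constraint more explicitly; the substance is identical.
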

\begin{proof}
Let $U = \cup_{i=1}^m C_{z_i}$. From \cite[(2.6)]{GoswamiRodriguezSeveroRadius}, we have
\begin{align*}
\text{Cap}(U) \geq \frac{\vert U \vert}{\max_{x \in U} \sum_{y \in U} g(x,y)},
\end{align*}
 and so we need to bound $\max_{x \in U} \sum_{y \in U} g(x,y)$. By \eqref{eq: green-asymptotic}, without loss of generality we can assume $U \subset B_{\lfloor Cm^{1/d}L \rfloor}(z)$ for some $z \in \Z^d$ and large $C$.
 
For any $x \in U$, we have the bound
\begin{align*}
\sum_{y \in U} g(x,y) = \sum_{r=0}^\infty \sum_{y \in \mathcal D_r(x)} g(x,y)
\end{align*}
where $\mathcal D_r(x) = U \cap \{y \in \Z^d:  \overline{L}r \leq \vert x-y \vert_\infty < \overline{L}(r+1) \}$ for $\overline{L} = (1+2K)L$. Since $\vert z_i - z_j \vert_\infty \geq \overline{L}$ for $i \neq j$, we have
\begin{align*}
\{ z_1,\ldots, z_m \} \cap  \{x \in \Z^d:  \overline{L}r \leq \vert z-x \vert_\infty \leq \overline{L}(r+1) \}\leq C r^{d-1}.
\end{align*} 
In particular, we have $\vert \mathcal D_r \cap U \vert \leq Cr^{d-1} L^d$. Since $\vert x - y \vert_\infty \geq \overline L r$ for $y \in \mathcal D_r$, by \eqref{eq: green-asymptotic} we have $g(x,y) \leq C (rL)^{2-d}$. Hence
\begin{align*}
\sum_{y \in U} g(x,y) \leq \sum_{r=0}^{\lfloor C'm^{1/d} \rfloor } C (rL)^{2-d}r^{d-1}L^d \leq C m^{2/d} L^{2},
\end{align*}
and we conclude that
\begin{align*}
\text{Cap}(U) \geq \frac{cmL^{d}}{m^{2/d}L^{2}} = c\cdot m^{1-2/d}L^{d-2}.
\end{align*}
\end{proof}
\begin{lemma}\label{lem: harmonic-bad}
Suppose $\mathcal C$ is a subset of $\mathbb L $ of points at mutual distance at least $(2K+1)L$. There exist $c = c(K)$ and $c'=c'(K)$ such that for every $\epsilon>0$ and $m,L \in \N$ satisfying $m^{2/d}/L^{d-2} \leq c'\epsilon^2$, and any $\{z_1,\ldots, z_m \} \subset \mathcal C$,
\begin{align*}
\prob[\text{$z_i$ is $\xi$-bad at level $\epsilon$ for }i=1,\ldots, m] \leq \exp\left(-c \epsilon^2 \operatorname{Cap}(\cup_{i=1}^m C_{z_i}) \right).
\end{align*}
\end{lemma}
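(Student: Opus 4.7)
The plan is to apply Lemma \ref{lem: harmonic-control} directly to the sub-collection $\mathcal C' = \{z_1, \ldots, z_m\} \subset \mathcal C$ with the parameter $a = \varepsilon$. Since $\mathcal C' \subset \mathcal C$ inherits the mutual $\vert \cdot \vert_\infty$-distance at least $(2K+1)L$, Lemma \ref{lem: harmonic-control} is applicable and yields (for $L$ large)
\begin{equation*}
\log \prob \left[ \bigcap_{i=1}^m \Bigl\{ \inf_{y \in D_{z_i}} \xi^{U_{z_i}}_y \leq -\varepsilon \Bigr\} \right] \leq - c' \left( \varepsilon - c \sqrt{\frac{m}{\operatorname{Cap}(\Sigma)}} \right)_+^2 \operatorname{Cap}(\Sigma),
\end{equation*}
where $\Sigma = \bigcup_{i=1}^m C_{z_i}$. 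The left-hand side is exactly the log-probability we want to bound, by definition of being $\xi$-bad at level $\varepsilon$.

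The remaining task is to absorb the correction term $c\sqrt{m/\operatorname{Cap}(\Sigma)}$ into $\varepsilon/2$ using the hypothesis $m^{2/d}/L^{d-2} \leq c' \varepsilon^2$. For this I would invoke Lemma \ref{lem: capacity}, which provides the lower bound
\begin{equation*}
\operatorname{Cap}(\Sigma) \geq c_3 \cdot m^{1-2/d} L^{d-2},
\end{equation*}
so that
\begin{equation*}
\frac{m}{\operatorname{Cap}(\Sigma)} \leq \frac{m^{2/d}}{c_3 L^{d-2}} \leq \frac{c'}{c_3} \varepsilon^2.
\end{equation*}
Choosing the constant $c'$ in the hypothesis small enough (depending only on $K$ through $c_3$ and the absolute constant $c$ from Lemma \ref{lem: harmonic-control}), this ensures $c \sqrt{m/\operatorname{Cap}(\Sigma)} \leq \varepsilon/2$, so $(\varepsilon - c\sqrt{m/\operatorname{Cap}(\Sigma)})_+ \geq \varepsilon/2$.

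Plugging this back into the Gaussian estimate yields
\begin{equation*}
\log \prob[\text{$z_i$ is $\xi$-bad at level $\varepsilon$ for $i = 1, \ldots, m$}] \leq - \tfrac{c'}{4} \varepsilon^2 \operatorname{Cap}(\Sigma),
\end{equation*}
which is the desired bound with $c = c'/4$. The proof is essentially a direct invocation of the two preceding lemmas, with the only mildly delicate step being the calibration of the constant $c'$ in the hypothesis so that the subtractive term in the Gaussian tail from Lemma \ref{lem: harmonic-control} can be swallowed; there is no genuine obstacle since both the hypothesis and the capacity lower bound scale in $m$ and $L$ in exactly the same way.
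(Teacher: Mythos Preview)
Your proof is correct and follows essentially the same approach as the paper: apply Lemma~\ref{lem: harmonic-control} to the sub-collection, then use the capacity lower bound from Lemma~\ref{lem: capacity} together with the hypothesis $m^{2/d}/L^{d-2} \leq c'\varepsilon^2$ to absorb the correction term, choosing $c'$ small enough. The paper's argument is identical in structure and detail.
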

\begin{proof}
We apply Lemma \ref{lem: harmonic-control} 
\begin{align*}
&\prob[\text{$z_i$ is $\xi$-bad at level $\epsilon$ for }i=1,\ldots, m]\\
&= \prob \left[ \inf_{ x \in D_{z_i}} \xi_x^{U_{z_i}} \leq -\epsilon, \text{for }i = 1,\ldots, m \right] \\
&\leq \exp\left( -c'\left( \epsilon -c\sqrt{\frac{m}{\text{Cap}(\cup_{i=1}^m C_{z_i})}} \right)_+^2\text{Cap}(\cup_{i=1}^m C_{z_i})\right).
\end{align*}
Combining Lemma \ref{lem: capacity} with the assumption $m^{2/d} \leq c' \epsilon^2 L^{d-2}$, we have
\begin{align*}
\frac{m}{\text{Cap}(\cup_{i=1}^m C_{z_i})} \leq \frac{m}{c_3m^{1-2/d}L^{d-2}} \leq \frac{c' \epsilon^2}{c_3}.
\end{align*}
Hence for small enough $c'$, we have
\begin{align*}
\prob[\text{$z_i$ is $\xi$-bad at level $\epsilon$ for }i=1,\ldots, m]\leq  \exp\left( - c\epsilon^2 \text{Cap}(\cup_{i=1}^m C_{z_i}) \right).
\end{align*}
This finishes the proof.
\end{proof}
With Lemmas \ref{lem: indep-bad} and \ref{lem: harmonic-bad}, we will prove in a straightforward manner Proposition \ref{prop: bad-box}.
\begin{proof}[Proof of Proposition \ref{prop: bad-box}]
Define the events
\begin{equation*}
\mathcal E = \{ \exists z_1,\ldots, z_{\lfloor m/2 \rfloor} \in \mathbb L \cap B_N: z_1, \ldots, z_{\lfloor m/2 \rfloor} \: \text{are}  \: \psi\text{-bad at level }(h_1,h_2)\}
\end{equation*}
and 
\begin{equation*}
\mathcal F = \{ \exists z_1,\ldots, z_{\lfloor m/2 \rfloor} \in \mathbb L \cap B_N: z_1, \ldots, z_{\lfloor m/2 \rfloor} \: \text{are}  \: \xi\text{-bad at level }\epsilon\},
\end{equation*}
so that by the union bound
\begin{align*}
\prob[\vert \mathcal{B}_N(\epsilon,h_1,h_2) \vert \geq m] \leq \prob[\mathcal E] + \prob[\mathcal F].
\end{align*}
Define $\mathcal A_{N,L} = \vert \{z \in \mathbb L : C_z \cap B_N \neq \varnothing \}\vert$, and note that $\mathcal A_{N,L}  \leq C N^{d}/L^d$. We bound the first term on the right hand side. We observe that if there are $\lfloor m/2 \rfloor $ boxes, we can choose in some fixed deterministic way a subset of $ m' = C m/(2K+1)^d$ boxes which are $(2K+1)L$ separated. We thus have
\begin{align*}
\prob[\mathcal E] \leq \binom{\mathcal A_{N,L}}{ m' }  \sup_{\{z_1,\ldots, z_{m'} \}}\prob[z_1, \ldots, z_{m'} \: \text{are}  \: \psi\text{-bad at level }(h_1,h_2)],
\end{align*}
where the supremum is over all $ \{ z_1,\ldots, z_{m'} \} \subset \mathbb L \cap B_N$ with mutual distance at least $(2K+1)L$. Applying Lemma \ref{lem: indep-bad} and the bounds $\binom{n}{k} \leq n^k$ and $ m' \leq C m/(2K+1)^d$ yields us
\begin{align*}
\prob[\mathcal E]  \leq \exp(C' m (\log m + \log N) - c m (\log L)^{1+c_2})
\end{align*}
for $C' = C/(2K+1)^d$.
We now bound the second term. The previous separation argument, combined with Lemmas \ref{lem: capacity} and \ref{lem: harmonic-bad}, implies
\begin{align*}
\prob[\mathcal E] \leq \binom{\mathcal A_{N,L}}{ m' } \exp(-c \epsilon^2 m^{1-2/d} L^{d-2}) \leq \exp(C' m\log N  -c \epsilon^2 m^{1-2/d} L^{d-2}).
\end{align*}
This finishes the proof.
\end{proof}
\section{Connectivity} \label{sec: connectivity}
In this section, we will construct a deterministic path between any two points in the same connected component based on arguments from \cite[Section 3]{AntalPisztoraChemical}. First, we show that we can construct a path in $E^{\geq h}$ along a sequence of good boxes.

\begin{proposition} \label{prop: path}
 If $\{z_1,\ldots, z_n\} \subset \mathbb L$ is a sequence of nearest-neighbor points in $\mathbb L$ which are all good at level $(\epsilon,h_1,h_2)$, then there exists a path in 
\begin{align*}
E^{\geq h_1 -\epsilon} \cap \left(\cup_{i=1}^n D_{z_i} \right)
\end{align*}
starting at $C_{z_1}$ and ending in $C_{z_n}$ whose length is bounded by $C_1 L\cdot n$.
\end{proposition}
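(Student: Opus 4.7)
The plan is to assemble the desired path one box-transition at a time, producing for each consecutive pair $(z_i,z_{i+1})$ a bridge of length at most $C_1L$ in $\{\varphi\ge h_1-\epsilon\}$; concatenating these $n-1$ bridges then yields a path of total length at most $C_1L\cdot n$ from a vertex of $C_{z_1}$ to a vertex of $C_{z_n}$. Each bridge relies on two ingredients per box: a canonical ``big cluster'' supplied by $\psi$-goodness, and the transfer of $\psi^{U_{z_i}}$-connectivity into $\varphi$-connectivity supplied by $\xi$-goodness.

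First I would fix, for each good $z_i$, a distinguished big cluster. The Exist clause of $\text{LocUniq}(\psi^{U_{z_i}},z_i,h_1,h_2)$ (part of $\psi$-goodness) yields a connected component $\mathcal C_i$ of $\{\psi^{U_{z_i}}\ge h_2\}\cap C_{z_i}$ of diameter at least $L/10$. The $\xi$-goodness gives $\xi^{U_{z_i}}>-\epsilon$ on $D_{z_i}$, so $\varphi\ge \psi^{U_{z_i}}-\epsilon$ on $D_{z_i}$; in particular,
\[
\{\psi^{U_{z_i}}\ge h_1\}\cap D_{z_i}\;\subset\;\{\varphi\ge h_1-\epsilon\}\cap D_{z_i},
\]
so $\mathcal C_i\subset\{\varphi\ge h_2-\epsilon\}\cap C_{z_i}$ and any nearest-neighbor path in $\{\psi^{U_{z_i}}\ge h_1\}\cap D_{z_i}$ automatically lies in $\{\varphi\ge h_1-\epsilon\}$.

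The central step is the bridge from $\mathcal C_i$ to $\mathcal C_{i+1}$ inside $\{\varphi\ge h_1-\epsilon\}\cap D_{z_i}$. Since $|z_{i+1}-z_i|_1=L$, we have $C_{z_{i+1}}\subset D_{z_i}$, so both clusters live in $D_{z_i}$. I would argue that the $\psi^{U_{z_{i+1}}}$-big cluster $\mathcal C_{i+1}$, of diameter at least $L/10$ and sitting in $\{\varphi\ge h_2-\epsilon\}$ by $\xi$-goodness of $z_{i+1}$, gives rise (possibly after an $\epsilon$-sprinkle of the height thresholds) to a big cluster of $\psi^{U_{z_i}}$ on $C_{z_{i+1}}$. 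The Unique clause of $\text{LocUniq}(\psi^{U_{z_i}},z_i,h_1,h_2)$ applied with $x=z_{i+1}$ then links $\mathcal C_i$ and this cluster via $\{\psi^{U_{z_i}}\ge h_1\}\cap D_{z_i}$, while the chemical-distance condition $\eta_{z_i,h_1}\le C_1L$ from $\psi$-goodness of $z_i$ caps the bridge length by $C_1L$. The previous inclusion lifts the bridge to $\{\varphi\ge h_1-\epsilon\}$, and concatenating over $i=1,\dots,n-1$ produces the desired path of length at most $C_1L\cdot n$ in $\{\varphi\ge h_1-\epsilon\}\cap\bigcup_i D_{z_i}$.

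The main obstacle I anticipate is precisely this cross-box identification: converting a $\psi^{U_{z_{i+1}}}$-big cluster on $C_{z_{i+1}}$ into a $\psi^{U_{z_i}}$-big cluster on the same box, given only the one-sided $\xi$-bounds $\xi^{U_{z_i}},\xi^{U_{z_{i+1}}}>-\epsilon$. The relation $\psi^{U_{z_i}}-\psi^{U_{z_{i+1}}}=\xi^{U_{z_{i+1}}}-\xi^{U_{z_i}}$ on $U_{z_i}\cap U_{z_{i+1}}$ must be handled carefully, most likely by choosing $h_1,h_2$ with a buffer comfortable relative to $\epsilon$ so that the LocUniq and chemical-distance estimates remain applicable at the shifted levels and both candidate big clusters can be matched on the overlap.
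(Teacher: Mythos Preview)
Your plan is essentially the paper's proof: fix a canonical big cluster $\mathcal C_i$ in each $C_{z_i}$ via the Exist clause, bridge $\mathcal C_i$ to $\mathcal C_{i+1}$ inside $D_{z_i}$ using the Unique and chemical-distance clauses of $\psi$-goodness, and lift everything to $\{\varphi\ge h_1-\epsilon\}$ via $\xi$-goodness. The paper's argument is a short paragraph doing exactly this and finishing by induction.

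The cross-box obstacle you raise is genuine, and the paper's proof glosses over it as well: the paper simply asserts that the two big clusters ``are connected in $\{\psi^{U_z}\ge h_1\}\cap D_{z_j}$'' without confronting the fact that $\mathcal C_{j+1}$ is a $\psi^{U_{z_{j+1}}}$-cluster, not a $\psi^{U_{z_j}}$-cluster, so neither the Unique clause nor the $\eta_{z_j,h_1}$ bound applies to it directly. With only the one-sided information $\xi^{U_{z_j}},\xi^{U_{z_{j+1}}}>-\epsilon$, the difference $\psi^{U_{z_j}}-\psi^{U_{z_{j+1}}}=\xi^{U_{z_{j+1}}}-\xi^{U_{z_j}}$ is uncontrolled from above on $D_{z_j}$, so you cannot in general push $\mathcal C_{j+1}$ into a $\psi^{U_{z_j}}$-cluster at any fixed level. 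A clean way to close the gap is to strengthen $\xi$-goodness to the two-sided condition $\sup_{D_z}|\xi^{U_z}|<\epsilon$, which costs nothing in the later estimates by the second inequality of Lemma~\ref{lem: harmonic-control}; then $|\psi^{U_{z_j}}-\psi^{U_{z_{j+1}}}|<2\epsilon$ on $D_{z_j}\cap D_{z_{j+1}}$, and $\mathcal C_{j+1}$ becomes a big $\psi^{U_{z_j}}$-cluster at level $h_2-2\epsilon$. Taking the buffer between $h_1$ and $h_2$ large relative to $\epsilon$ (as you suggest) then makes the Unique and $\eta$-bounds applicable at the shifted heights. Your diagnosis of the difficulty is therefore sharper than what the paper writes down, even though the overall strategies coincide.
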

\begin{proof}
Suppose $z_j$ and $z_{j+1}$ are nearest-neighbor points in $\mathbb L$. Since they are both $\psi$-good at level $(h_1,h_2)$, this implies both $C_{z_j}$ and $C_{z_{j+1}}$ contain connected components of $\{\psi^{U_{z_j}} \geq h_2  \}$ and $\{\psi^{U_{z_{j+1}}} \geq h_2 \}$, respectively, with diameter greater than $L/10$. Furthermore, they are connected in $\{ \psi^{U_z} \geq h_1\} \cap D_{z_j}$ and the chemical distance $ \eta_{z_j,h_1}$ between any two points in any of these connected components in $D_{z_j}$ is bounded by $C_1L$. Since $z_j$ and $z_{j+1}$ are both $\xi$-good at level $\epsilon$, these connectivity properties extend to $(D_{z_j} \cup D_{z_{j+1}})\cap E^{\geq h_1-\epsilon}$. Using induction finishes the proof.
\end{proof}

We introduce more notation. We say $\gamma \subset \mathbb L$ is a $*$-connected path if $\gamma =  (z_1,\ldots, z_n)$ and $\vert z_{j+1} - z_j \vert_\infty = L$ for $j=1,\ldots,n-1$. We say a set $U \subset \mathbb L$ is $*$-connected if for any $x,y \in U$, there exists a $*$-connected path between $x$ and $y$ contained in $U$. Denote $\mathscr{C} $ to be the collection of  $*$-connected components of $\{z \in \mathbb L : z \text{ is } (\epsilon,h_1,h_2) \text{ bad}  \}$. For $z \in \mathbb L$, denote $\mathbf{C}_z$ to be the element of $\mathscr{C} $ containing $z$. If $z$ is good, denote $\mathbf{C}_z = \varnothing$ and $\partial \mathbf{C}_z= \{ z\}$. For $x \in \Z^d$, let $\ell(x) \in \mathbb L$ be the unique point such that $x \in C_{\ell(z)}$.  The following result follows from \cite[Proposition 3.1]{AntalPisztoraChemical}.
\begin{proposition}[{\cite[Proposition 3.1]{AntalPisztoraChemical}}] \label{prop: amend-path} 
Fix $x,y \in \Z^d$ and a $*$-connected path $\sigma = ( \sigma_1, \ldots, \sigma_n) \subset \mathbb L$ with $\sigma_1 = \ell(x)$ and $\sigma_n = \ell(y)$. On the event $\{x \xleftrightarrow{ \geq h_1-\epsilon} y \}$, there exists a self-avoiding path $\gamma \subset E^{\geq h_1 - \epsilon}$ connecting $x$ and $y$ such that
\begin{align*}
\gamma \subset W = \bigcup_{j=1}^n \bigcup_{z \in \overline{ \mathbf C}_{\sigma_j}} D_{ z}
\end{align*}
where $\overline{ \mathbf C}_{\sigma_j} = \mathbf C_{\sigma_j} \cup \partial\mathbf C_{\sigma_j}  $
\end{proposition}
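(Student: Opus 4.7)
The plan is to reproduce the argument of \cite[Proposition 3.1]{AntalPisztoraChemical}, with Proposition \ref{prop: path} playing the role of the good-box connectivity input used there. Since we are on the event $\{x \xleftrightarrow{\geq h_1 - \epsilon} y\}$, fix any self-avoiding path $\pi_0 \subset E^{\geq h_1 - \epsilon}$ from $x$ to $y$. It suffices to modify $\pi_0$ into a self-avoiding path $\gamma \subset E^{\geq h_1 - \epsilon}$ contained in the tube $W$; this will be done by replacing each maximal excursion of $\pi_0$ outside $W$ by a detour contained in $W$, and then performing loop-erasure.

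The key structural fact is that every vertex of the set $S := \bigcup_{j=1}^n \overline{\mathbf{C}}_{\sigma_j}$ that is $*$-adjacent to $\mathbb L \setminus S$ is itself good. Indeed, if $z \in S$ were bad, then $z \in \mathbf{C}_{\sigma_j}$ for some $j$, and every $*$-neighbor of $z$ would lie in $\mathbf{C}_{\sigma_j} \cup \partial \mathbf{C}_{\sigma_j} \subset S$, so $z$ could not sit on the boundary of $S$. Consequently, whenever $\pi_0$ crosses from $W$ to $W^c$, this crossing takes place through a $D$-box $D_z$ indexed by a good vertex $z$ on the boundary of $S$, and the endpoints $a, b$ of each excursion lie in such good $D$-boxes $D_z, D_{z'}$.

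To build the replacement detour for an excursion with endpoints $a \in D_z$ and $b \in D_{z'}$ (with $z, z'$ good), I would choose a $*$-connected sequence of good vertices in $S$ joining $z$ and $z'$; such a sequence exists because $S$ is $*$-connected and its inner boundary is entirely good, so one can route along the good skin of $S$ between $z$ and $z'$. Applying Proposition \ref{prop: path} to this sequence yields a path in $E^{\geq h_1 - \epsilon}$ between some point in $C_z$ and some point in $C_{z'}$ that stays inside $W$, and a further application of the same good-box connectivity inside $D_z$ and $D_{z'}$ extends this path so that it joins exactly $a$ and $b$. Splicing these detours into $\pi_0$ in place of the excursions and loop-erasing produces the required $\gamma$.

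The main obstacle is the topological bookkeeping: one must carefully verify that every exit and re-entry of $\pi_0$ from $W$ occurs through a $D$-box indexed by a good vertex of $S$, and that the $*$-connected sequence of good vertices used in the replacement can be chosen within $S$. This is precisely where the definition $\overline{\mathbf{C}}_z = \mathbf{C}_z \cup \partial \mathbf{C}_z$, which bakes in the good boundary of each bad cluster, and the fattening of the reference box $C_z$ into the larger $D_z$ become essential.
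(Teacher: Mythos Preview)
Your proposal follows the same approach as the paper: the paper does not give a self-contained proof but cites \cite[Proposition~3.1]{AntalPisztoraChemical} and remarks that Proposition~\ref{prop: path} can substitute for the good-box property \cite[(2.13)]{AntalPisztoraChemical} used there. Your outline---the inner $*$-boundary of $S$ consists of good boxes, reroute each excursion of $\pi_0$ outside $W$ along a chain of good boxes in $S$, then loop-erase---is precisely the Antal--Pisztora construction, and the delicate step you flag (attaching the Proposition~\ref{prop: path} detour to the exact endpoints $a,b$, which a priori need not lie in a cluster of diameter $\geq L/10$ in any good $C$-box) is exactly the bookkeeping their full proof carries out.
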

\begin{remark}
This proposition follows from the proof of \cite[Proposition 3.1]{AntalPisztoraChemical}. While their setting is Bernoulli bond percolation, their proof is a deterministic construction of amending a nearest-neighbor path inside a bond percolation cluster such that it lies in the boundary and interior of a cluster of bad boxes. Their proof does not rely on the distribution of the cluster, rather on the macroscopic properties of good boxes, which they denote as a `white boxes'. Their definition of a good box, see \cite[(2.9)]{AntalPisztoraChemical}, is not the same as ours. However, the only property the authors use of good boxes is \cite[(2.13)]{AntalPisztoraChemical}, which we can replace with Proposition \ref{prop: path}.
\end{remark}
\section{Proof of Theorem \ref{thm: chem-bound}} \label{sec: proof}
Fix $h<h_*$, $\epsilon \in (0,(h_*-h)/4)$ and $N \in \mathbb N$. Let $h_1 = h+\epsilon<h_*$ and $h_2 = h+2\epsilon<h_*$. Let $M>C/\epsilon^2$ for some large $C$, and let 
\begin{align*}
m_N = \left\lfloor \frac{N^{1-2/d}}{\log N}  \right \rfloor, \qquad L_N = \lfloor M (N/m_N)^{1/d} \rfloor= \lfloor M( N^{2/d} \log N)^{1/d} \rfloor.
\end{align*}
Note that by our choice $m_N L_N^d \asymp N$.
\begin{lemma} \label{lem: bound-chem}
Fix $x,y \in B_N$. On the event
\begin{align*}
\{x \xleftrightarrow{\geq h} y \} \cap \{ \vert \mathcal B_{2N}(\epsilon,h_1,h_2) \vert \leq m_N \}
\end{align*}
 there exists $C_2 = C_2(h,\epsilon) $ such that
\begin{align*}
\rho_h(x,y) \leq C_2N.
\end{align*}
\end{lemma}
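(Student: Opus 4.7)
The strategy is to construct an explicit path in $E^{\geq h}$ from $x$ to $y$ of length $O(N)$, by building a macroscopic ``good-box skeleton'' and lifting it to the microscopic level via Proposition \ref{prop: path}. The key numerical identity is $m_N L_N^d \asymp N$, which ensures that the detours around bad clusters contribute only $O(N)$ to the chemical distance.

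First I would pick a nearest-neighbor path $\sigma=(\sigma_1,\ldots,\sigma_n) \subset \mathbb L$ from $\ell(x)$ to $\ell(y)$ with $n \leq CN/L$ (possible since $|x-y|_\infty \leq 2N$), and amend it to a nearest-neighbor path $\tilde\sigma$ consisting entirely of good boxes: whenever $\sigma$ enters a bad $*$-connected component $\mathbf C$, I bypass it by walking along its outer boundary $\partial\mathbf C$, whose vertices are good by definition. Using $|\partial \mathbf C| \leq 3^d |\mathbf C|$ and $\sum_{\mathbf C} |\mathbf C| \leq |\mathcal B_{2N}(\epsilon,h_1,h_2)| \leq m_N$, the amended path has length $\tilde n \leq C(N/L + m_N)$. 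Proposition \ref{prop: path} applied to $\tilde\sigma$ then produces a path $\gamma_0 \subset E^{\geq h}\cap \bigcup_j D_{\tilde\sigma_j}$ (using $h_1-\epsilon = h$) from $C_{\tilde\sigma_1}$ to $C_{\tilde\sigma_{\tilde n}}$ of length at most $C_1 L\tilde n \leq C(N + Lm_N)$; a direct computation from the formulas for $L_N$ and $m_N$ gives $Lm_N = O(N)$, so $\gamma_0$ has length $O(N)$.

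To anchor $\gamma_0$ at $x$ and $y$, I would invoke Proposition \ref{prop: amend-path} with the original path $\sigma$: on $\{x \xleftrightarrow{\geq h} y\}$ it produces a self-avoiding path $\gamma_1 \subset E^{\geq h} \cap W$ connecting $x$ and $y$. Its initial and terminal segments -- the portions until $\gamma_1$ first meets $\gamma_0$ and after it last leaves $\gamma_0$ -- lie in local regions $\bigcup_{z \in \overline{\mathbf C}_{\ell(x)}} D_z$ and $\bigcup_{z \in \overline{\mathbf C}_{\ell(y)}} D_z$ of volume at most $CL^d (|\mathbf C_{\ell(x)}|+1) \leq CL^d m_N = O(N)$, and similarly for $y$, so splicing them onto $\gamma_0$ yields a path from $x$ to $y$ of length $O(N)$. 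The main obstacle is verifying that $\gamma_1$ in fact meets $\gamma_0$ -- equivalently, that both lie in the same connected component of $E^{\geq h}$ -- and that the endpoint segments can be controlled by the local bad-cluster volumes; both follow from the deterministic Antal--Pisztora construction underlying Proposition \ref{prop: amend-path}, which is designed precisely so that $\gamma_1$ traverses the same ``large'' good-box clusters that Proposition \ref{prop: path} exhibits.
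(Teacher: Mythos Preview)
Your approach differs from the paper's and is more involved than necessary. The paper applies Proposition~\ref{prop: amend-path} \emph{once} to obtain a single self-avoiding path $\gamma \subset E^{\geq h} \cap W$ from $x$ to $y$, and then bounds $|\gamma|$ directly by splitting the boxes in $W$ into good and bad. In each good box the Antal--Pisztora construction routes $\gamma$ through the local crossing cluster, contributing at most $C_1 L$; in each bad box the self-avoidance of $\gamma$ gives the trivial volume bound $|D_z| \leq C L^d$. Since $W$ contains at most $CN/L + C m_N$ good boxes and at most $m_N$ bad boxes, one gets $|\gamma| \leq (CN/L + Cm_N)\cdot C_1 L + m_N \cdot C L^d \leq C_2 N$, where the bad-box term is controlled precisely by $m_N L^d \asymp N$. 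The point you miss is that one does not need to \emph{avoid} bad boxes at all: traversing them costs $m_N L^d = O(N)$ in total, and this is where the identity $m_N L^d \asymp N$ enters, not merely in the endpoint anchoring.

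Your two-stage construction (good-box skeleton $\tilde\sigma \to \gamma_0$ via Proposition~\ref{prop: path}, then anchor with pieces of $\gamma_1$ from Proposition~\ref{prop: amend-path}) has a genuine gap at the anchoring step. You need $\gamma_1$ to meet $\gamma_0$, and you need the portion of $\gamma_1$ before that meeting to lie in $\bigcup_{z \in \overline{\mathbf C}_{\ell(x)}} D_z$. Neither is justified by your appeal to the Antal--Pisztora construction: $\gamma_1$ is built by amending an arbitrary cluster path from $x$ to $y$ to lie in $W$, and there is no reason it visits the particular good boxes of your detoured skeleton $\tilde\sigma$, nor that it reaches $\gamma_0$ before leaving $\overline{\mathbf C}_{\ell(x)}$. (A secondary issue: Proposition~\ref{prop: path} as stated requires a nearest-neighbor sequence in $\mathbb L$, whereas the boundary $\partial \mathbf C$ of a $*$-connected bad component is in general only $*$-connected, so $\tilde\sigma$ may only be a $*$-path. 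This is fixable since the proof of Proposition~\ref{prop: path} works verbatim for $*$-neighbors, but it should be said.) The paper's direct approach sidesteps all of this.
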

\begin{proof}
Let $x,y \in B_N$ in the same cluster, and let $\sigma= (\sigma_1,\ldots, \sigma_n)$ be a $*$-connected path of vertices in $\mathbb L$ such that $\sigma_1 = \ell(x)$ and $\sigma_n = \ell(y)$. Since $x,y \in B_N$, we can assume $n \leq C N/L_N$. By Proposition \ref{prop: amend-path}, there exists a path $\gamma \subset E^{\geq h_1-\epsilon}=E^{\geq h}$ connecting $x$ and $y$ such that $\gamma \subset W$, where
\begin{align*}
\gamma \subset W = \bigcup_{j=1}^n \bigcup_{z \in \overline{ \mathbf C}_{\sigma_j}} D_{ z}.
\end{align*}
Since $x,y \in B_N$ and $B_{2N}$ has at most $m_N$ $(\epsilon,h_1,h_2)$-bad boxes, we infer that the diameter of $\overline{ \mathbf C}_{\sigma_j}$ is at most $7u_N L_N$ for $j=1,\ldots, n$. Since $7u_NL_N \ll N$, we conclude that $W \subset B_{2N}$. Since $B_{2N}$ contains $W$ and has at most $m_N$ bad boxes, this implies that $W$ intersects at most $(2^d+1) m_N$ boxes which are either $(\epsilon,h_1,h_2)$-bad or $*$-neighbors of one. Hence the path crosses at most $CN/L_N + (2^d+1) m_N$ good boxes, and at most $m_N$ bad boxes. By Proposition \ref{prop: path}, the chemical distance inside a good box is bounded by $C_1 L_N$, while for a bad box a trivial upper bound for the chemical distance is $C'L_N^d$ for some $C'$. We thus get
\begin{align*}
\vert \gamma \vert \leq  \sum_{j=1}^n \sum_{z \in \overline{ \mathbf C}_{\sigma_j}} \max_{x,y \in D_z}\rho_h(x,y) \leq (C N/L_N+ (2^d+1) m_N) \times C_1 L + m_N \times C'L^d \leq C_2 N,
\end{align*}
 for some $C_2=C_2(h,\epsilon)$ since $m_NL_N^d \asymp N$.
\end{proof}

\begin{figure}[t!]  \centering
    \includegraphics[scale=0.50]{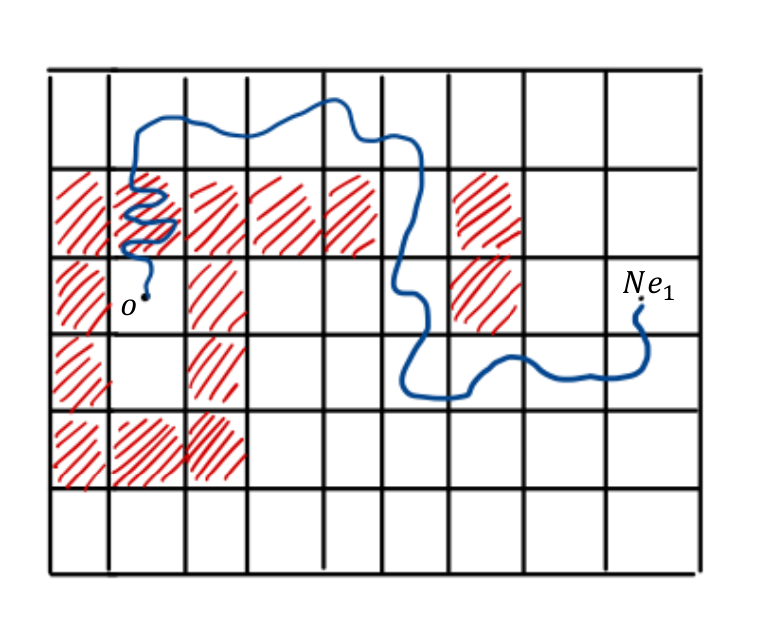}
    \caption{\textit{The red boxes represent bad boxes, while the blue line represents a connected subset of the level sets that connects the origin to $Ne_1$. Crossing a bad box, which is unavoidable in this example, incurs a cost of $O(L^d)$, whereas avoiding bad boxes costs $O(L)$. }}
\end{figure}

\begin{proof}[Proof of upper bound in Theorem \ref{thm: chem-bound}]
We first decompose our probability
\begin{align*}
\prob[\exists x,y \in \mathcal S_N(h) \cap B_N, \rho_h(x,y) > C N] &\leq \prob[\exists x,y \in \mathcal S_N(h) \cap B_N,   \nlr{}{\geq h}{x}{y} ]  \\
&+ \prob[\exists x,y \in \mathcal S_N(h) \cap B_N,  x \xleftrightarrow{\geq h} y, \rho_h(x,y) > C N].
\end{align*}
To bound the first term, we make a few observations. First, if $\{\nlr{}{\geq h}{x}{y}\}$ occurs, then necessarily either $x$ or $y$ does not lie in the infinite connected component. Second, if $x$ is in a connected component of diameter at least $N/10$, then it is connected to the boundary of $B(x,N/10)$. Hence we have
\begin{align*}
\{ \exists x,y \in \mathcal S_N(h) \cap B_N,   \nlr{}{\geq h}{x}{y} \} &\subset \{ \exists x \in \mathcal S_N(h) \cap B_N,   \nlr{}{\geq h}{x}{\infty}\}\\ 
&\subset \{ \exists x \in B_N  \text{ s.t. } x \xleftrightarrow{\geq h}\partial B(x,N/10),   \nlr{}{\geq h}{x}{\infty}\}.
\end{align*}
 From \eqref{eq: sharp-transition}, a union bound, and translation invariance, we have for $d \geq 3$
\begin{align*}
\prob[\exists x \in B_N  \text{ s.t. } x \xleftrightarrow{\geq h}\partial B(x,N/10),   \nlr{}{\geq h}{x}{\infty}] \leq \exp(-c N/\log N)
\end{align*}
which implies
\begin{align*}
\prob[\exists x,y \in \mathcal S_N(h) \cap B_N,   \nlr{}{\geq h}{x}{y} ] \leq e^{-c N/\log N}. 
\end{align*}
We bound the second term. By Lemma \ref{lem: bound-chem}, we get
\begin{align*}
\prob[\exists x,y \in \mathcal S_N(h) \cap B_N,  x \xleftrightarrow{\geq h} y,  \rho_h(x,y) > C_2 N] \leq \prob[\vert \mathcal B_{2N}(\epsilon,h_1,h_2 )\vert \geq m_N].
\end{align*}
By our choice of $m_N$ and $L_N$ we have
\begin{align*}
\frac{m_N^{2/d}}{L_N^{d-2}} \leq \frac{m_N^{2/d}}{M \log N \cdot m_N^{2/d} } = o(1) ,
\end{align*}
and so we can apply Proposition \ref{prop: bad-box}. We then get 
\begin{align*}
\prob[\vert \mathcal B_{2N}(\epsilon,h )\vert \geq m_N]& \leq \exp(C m_N \log N-c m_N (\log L_N)^{1+c_2})+ \exp(C m_N \log N-c \epsilon^2 m_N^{1-2/d}L_N^{d-2}) \\
&\leq \exp(-c N^{1-2/d} (\log N)^{c_2})+ \exp(-c \epsilon^2 N^{1-2/d}) \\
&\leq 2 \exp(-c \epsilon^2 N^{1-2/d}) 
\end{align*}
for large enough $N$, where the second inequality follows from $\epsilon^2 m_N^{1-2/d}L_N^{d-2} \geq C m_N \log N$ by our choice of $m_N$ and $L_N$. We thus have
\begin{align*}
\prob[\exists x,y \in \mathcal S_N(h) \cap B_N, \rho_h(x,y) > C_2 N] \leq \exp(-c N/\log N) + 2 \exp(-c \epsilon^2 N^{1-2/d}) 
\end{align*}
which finishes the proof.
\end{proof}
\section{Proof of Theorem \ref{thm: chem-lower-bound}}\label{sec: lowerbound-gff}
In this section we will prove the lower bounds using techniques from \cite{GoswamiRodriguezSeveroRadius}. We will need the following general result for lower bounds for the GFF. For $U \subset \Z^d$, denote $\prob_U$ to be the law of $\psi^U$. Given $A \in \mathcal B(\R^K)$, $K \subset \Z^d$, and $h \in \R$, we define
\begin{align}\label{eq: shift-event}
A^h = \{\varphi\vert_K - h \in A \}
\end{align} 
where $\varphi\vert_K - h $ refers to the field restricted to $K$ shifted by $-h$ coordinate-wise. 
\begin{lemma}[{\cite[Lemma 3.2]{GoswamiRodriguezSeveroRadius}}]\label{lem: cap-lowerbound}
Let $U_N \subset \subset V_N \subset \Z^d$ be subsets with $\operatorname{Cap}_{V_N}(U_N) \to \infty$. Let $A_N \in \mathcal B(\R^{U_N})$ and $I \subset \R$ be an interval such that, for every $h' \in I$,
\begin{align*}
\prob_{V_N}[A_N^{h'}] \to 1.
\end{align*}
Then for every $h \not \in I$,
\begin{align*}
\liminf_{N \to \infty} \frac{1}{\operatorname{Cap}_{V_N}(U_N)} \log \prob_{V_N}[A_N^h] \geq -\frac{1}{2}d(h,I)^2.
\end{align*}
\end{lemma}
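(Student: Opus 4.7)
The plan is to use a Cameron-Martin shift, together with a change-of-measure argument, to transfer the assumed concentration $\prob_{V_N}[A_N^{h'}] \to 1$ (for $h' \in I$) into a quantitative lower bound on $\prob_{V_N}[A_N^h]$. Fix $h' \in I$ and set $s = h - h'$. Let $\eta_N(x) = P^x[H_{U_N} < T_{V_N}]$ denote the equilibrium potential of $U_N$ inside $V_N$: then $\eta_N \equiv 1$ on $U_N$, $\eta_N \equiv 0$ outside $V_N$, $\eta_N$ is harmonic on $V_N \setminus U_N$, and its Dirichlet energy equals $\mathcal E(\eta_N, \eta_N) = \operatorname{Cap}_{V_N}(U_N) =: \kappa_N$. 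Denote by $\mu_s$ the law of the shifted field $\psi^{V_N} + s\eta_N$ under $\prob_{V_N}$. Because $\eta_N \equiv 1$ on $U_N$, the shift translates heights there by exactly $s$, yielding
$$\mu_s[A_N^h] \;=\; \prob_{V_N}[A_N^{h-s}] \;=\; \prob_{V_N}[A_N^{h'}] \;\longrightarrow\; 1 \qquad\text{as } N\to\infty.$$

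By the Cameron-Martin theorem for the GFF on $V_N$, the measure $\mu_s$ is absolutely continuous with respect to $\prob_{V_N}$ with
$$\frac{d\mu_s}{d\prob_{V_N}} \;=\; \exp\bigl(s Z_N - \tfrac{1}{2} s^2 \kappa_N\bigr), \qquad Z_N \;:=\; \sum_{x \in V_N} (-\Delta \eta_N)(x)\, \psi^{V_N}(x),$$
where $Z_N$ is a centered Gaussian of variance $\kappa_N$ under $\prob_{V_N}$, and therefore has mean $s\kappa_N$ and variance $\kappa_N$ under $\mu_s$. Define the typical event $B_N = \{|Z_N - s\kappa_N| \leq \sqrt{\kappa_N}\}$; by standard Gaussian tails, $\mu_s[B_N] \geq p_0$ for a positive constant $p_0$ independent of $N$. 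On $B_N$, decomposing $sZ_N - \tfrac{1}{2}s^2\kappa_N = \tfrac{1}{2}s^2 \kappa_N + s(Z_N - s\kappa_N)$ bounds the log-density by $\tfrac{1}{2}s^2 \kappa_N + |s|\sqrt{\kappa_N}$. Since $\mu_s[A_N^h]\to 1$, we have $\mu_s[A_N^h \cap B_N] \geq p_0/2$ for $N$ large, and the change-of-measure inequality gives
$$\prob_{V_N}[A_N^h] \;\geq\; \prob_{V_N}[A_N^h \cap B_N] \;\geq\; e^{-\tfrac{1}{2}s^2\kappa_N - |s|\sqrt{\kappa_N}}\, \mu_s[A_N^h \cap B_N] \;\geq\; \tfrac{p_0}{2}\, e^{-\tfrac{1}{2}s^2\kappa_N - |s|\sqrt{\kappa_N}}.$$

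Taking logarithms, dividing by $\kappa_N$, and using $\kappa_N \to \infty$ to absorb the $O(\sqrt{\kappa_N})$ and constant contributions yields
$$\liminf_{N\to\infty} \frac{1}{\kappa_N}\, \log \prob_{V_N}[A_N^h] \;\geq\; -\tfrac{1}{2}(h-h')^2,$$
and taking the infimum over $h' \in I$ produces the claimed $-\tfrac{1}{2}d(h,I)^2$. The main obstacle is the choice of shift: one needs $\eta_N \equiv 1$ on $U_N$ so that the event transformation $A_N^h \to A_N^{h'}$ works, while simultaneously minimizing the entropic cost $\tfrac{1}{2}s^2 \mathcal E(\eta_N, \eta_N)$; the variational characterization of capacity singles out the equilibrium potential as the minimizer and identifies the minimal cost with $\tfrac{1}{2}s^2 \operatorname{Cap}_{V_N}(U_N)$. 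The remaining technical point is controlling the linear fluctuation $s(Z_N - s\kappa_N)$ of the log-density, which is handled cleanly by restricting to $B_N$ and using the hypothesis $\kappa_N \to \infty$.
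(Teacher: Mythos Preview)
The paper does not supply its own proof of this lemma: it is quoted verbatim from \cite[Lemma~3.2]{GoswamiRodriguezSeveroRadius}. Your argument is correct and is precisely the standard Cameron--Martin change-of-measure proof used in that reference---shift by $s$ times the equilibrium potential $\eta_N$ of $U_N$ in $V_N$ (so that the shift equals $s$ on $U_N$ and has minimal Dirichlet energy $\kappa_N=\operatorname{Cap}_{V_N}(U_N)$), then control the Radon--Nikodym density on a typical set for the linear functional $Z_N$. The only cosmetic point is the last sentence: you take the \emph{supremum} of the lower bounds over $h'\in I$, which amounts to the infimum of $(h-h')^2$ and hence to $d(h,I)^2$; since $I$ is an interval this infimum is approached within $I$, so the conclusion follows as you state.
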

Our strategy to prove the lower bound will be to create a long path between the two points inside $E^{\geq h}$ which is insulated by $\{ \varphi < h \}$. To decouple the increasing event the points are connected, and the decreasing event the path is insulated, we will use the Gibbs-Markov decomposition. 

\begin{proof}[Proof of Theorem \ref{thm: chem-lower-bound}]
We first consider the more involved case $d=3$. For $r \geq 0$ and $n \in \N$, define the $r$-neighborhood of the line segment connecting $0$ to $ne_1$
\begin{align*}
P^{(1)}_{n,r} = [-r,n+r] \times  [-r,r]^{d-1} \cap \Z^d
\end{align*}
and the $r$-neighborhood of the line segment connecting $0$ to $ne_2$
\begin{align*}
P^{(2)}_{n,r} = [-r,r] \times [-r,n+r] \times [-r,r]^{d-2}\cap \Z^d.
\end{align*}
We fix $\alpha,N \in \N$ and define the set
\begin{align*}
P_{r} = P_r(N,\alpha) = P^{(2)}_{\alpha N,r} \cup (\alpha N e_2 + P^{(1)}_{N,r}) \cup (Ne_1+P^{(2)}_{\alpha N,r}).
\end{align*}
We fix $\epsilon>0$ and define the sets $U_N \subset V_N \subset W_N$ by $U_N=P_{\lfloor N^{\epsilon} \rfloor}$, $V_N=P_{ \lfloor N^{2\epsilon} \rfloor}$ and $W_N=P_{\lfloor N^{3\epsilon} \rfloor}$.

For a field $\chi: \Z^d \to \R$, define the events
\begin{align*}
D_N(\chi,h) = \{ B_{\lfloor N^{\epsilon/2} \rfloor}(0) \: \text{and} \: B_{\lfloor N^{\epsilon/2} \rfloor}(Ne_1) \text{ are connected by a path in } \{\chi \geq h \} \cap U_N\}
\end{align*}
and
\begin{align*}
F_N(h) =  \{ \nlr{}{\geq h}{\partial V_N }{W_N} \}.
\end{align*}
Since
\begin{align*}
D_N(h) \cap F_N(h)  \subset \bigcup_{x \in \partial B_{\lfloor N^{\epsilon/2} \rfloor}(0)}\bigcup_{y \in \partial B_{\lfloor N^{\epsilon/2} \rfloor}(Ne_1)} \{x \xleftrightarrow{\geq h} y, \: \rho_h(x,y)\geq (2\alpha+1) N-2\lfloor N^{\epsilon/2} \rfloor \},
\end{align*}
we have
\begin{align*}
D_N(h) \cap F_N(h)  \subset \{\exists x,y \in \mathcal S_N(h) \cap B_N, \: \rho_h(x,y) > \alpha N \}
\end{align*}
for large enough $N$.
\begin{figure}[t!]  \centering
    \includegraphics[scale=0.40]{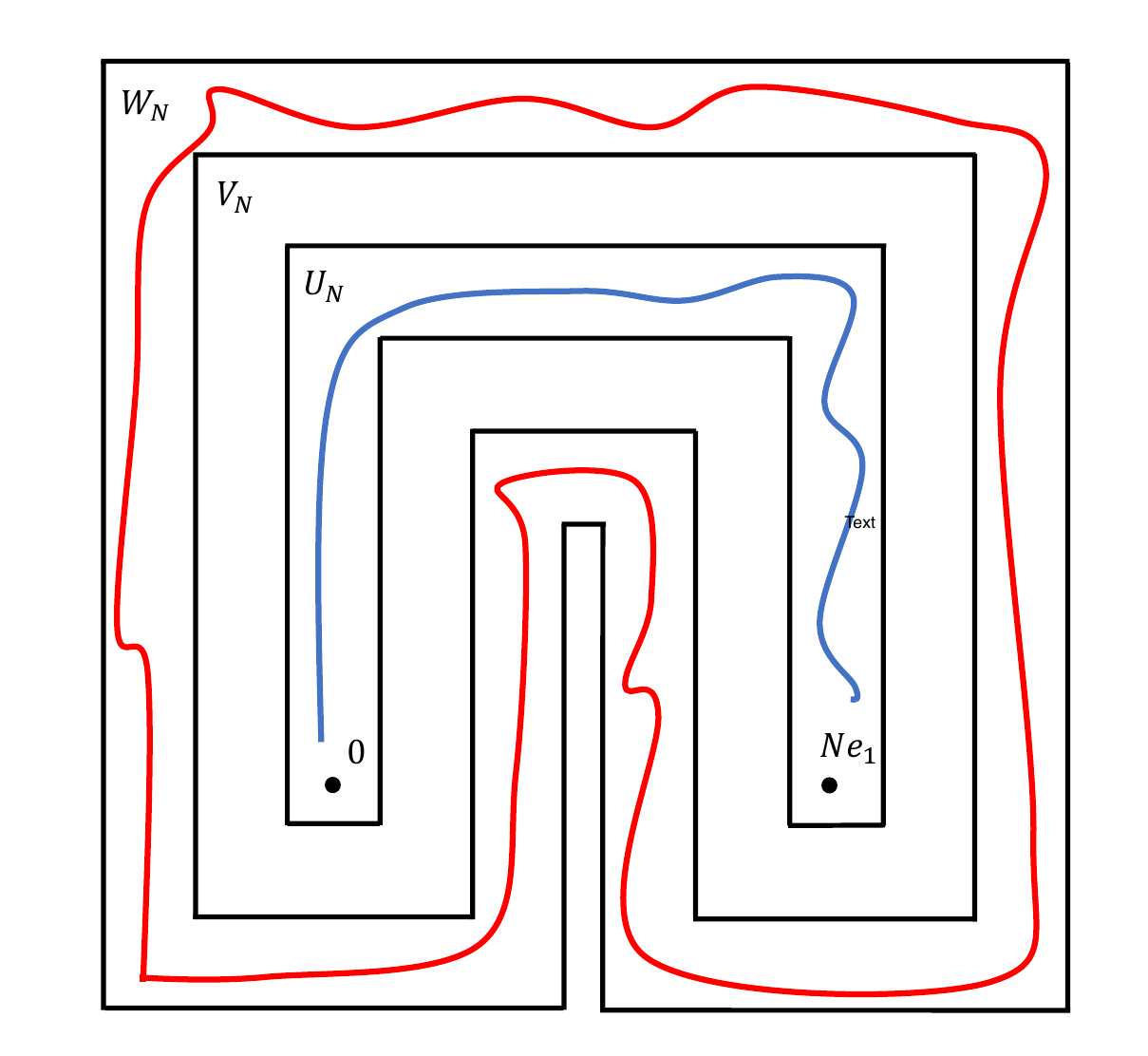}
    \caption{\textit{The event $D_N(h)$ creates a long path (in blue) inside $U_N \cap E^{\geq h}$, while the event $F_N(h)$ insulates (in red) the path, forcing the chemical distance to be at least $\alpha N$.}}
\end{figure}

To derive a lower bound for $\prob[D_N(h) \cap F_N(h) ]$, fix $\delta>0$ and define the event
\begin{align*}
E_N(h,\delta) = \left \{ \inf_{y \in U_N} \xi_y^{V_N} \geq -(h_* - h+\delta) \right \}.
\end{align*}
Since $D_N$ is an increasing event, and by Proposition \ref{prop: Gibbs-Markov}, we have
\begin{align*}
\prob[D_N(\varphi, h) \cap E_N(h,\delta) \cap F_N(h)] &\geq \prob[D_N(\psi^{V_N}, h_*+\delta) \cap E_N(h) \cap F_N(h)] \\
&= \prob[D_N(\psi^{V_N}, h_*+\delta)]\prob[ E_N(h,\delta) \cap F_N(h)].
\end{align*}
We will now derive lower bounds for both terms. We first claim that for $h \in (h_*,h_*+ \delta)$, $\prob[ E_N(h,\delta) \cap F_N(h)] \to 1$ as $N \to \infty$. From \eqref{eq: sharp-transition}, we have for $R \in \N$ and $h>h_*$
\begin{align*}
\prob[0 \xleftrightarrow{ \geq h} \partial B_R ] \leq Ce^{-c R/\log R} .
\end{align*}
Applying a union bound and translation invariance with this estimate for $R = \lfloor N^{3 \epsilon} \rfloor - \lfloor N^{2 \epsilon} \rfloor $, we have $\prob[F_N(h)] \to 1$ for $h>h_*$. To bound $\prob[E_N(h,\delta)]$, we first bound the variance of $\xi_x^{V_N} $ for $x \in U_N$. Following the computation from \cite{GoswamiRodriguezSeveroRadius}, see the equation below (3.15), we have
\begin{align*}
\E[(\xi^{V_N}_{x})^2] \leq C \cdot \text{dist}(x, V_N^c)^{-(d-2)}\leq CN^{-2\epsilon(d-2)}.
\end{align*}
Using Gaussian tail estimates and a union bound implies 
\begin{align*}
\prob \left[ \inf_{y \in U_N} \xi_y^{V_N} \geq -(h_* - h+\delta) \right] \to 1
\end{align*}
 for $h\in(h_*, h_* + \delta)$. We conclude that $\prob[ E_N(h,\delta) \cap F_N(h)] \to 1$ for $h\in(h_*, h_* + \delta)$ . Note that the events $ E_N(h,\delta)$ and $F_N(h)$ take the form in the assumption for Lemma \ref{lem: cap-lowerbound}. For $E_N(h,\delta)$, this is because we can write
\begin{align*}
\left \{ \inf_{y \in U_N} \xi_y^{V_N} \geq -(h_* - h+\delta) \right \} = \left \{ \inf_{y \in U_N} \sum_{x \in V_N}P^y[X_{T_{V_N}}=x](\varphi_x -h)\geq -(h_* +\delta) \right \} .
\end{align*}
Hence we can apply Lemma \ref{lem: cap-lowerbound} with $V_N = \Z^d$ and $U_N = W_N$ and conclude that for  $h<h_*$ and $\delta>0$
\begin{align*}
\prob[ E_N(h,\delta) \cap F_N(h)] \geq \exp(-c\operatorname{Cap}(W_N)).
\end{align*}
To derive an upper bound for $\operatorname{Cap}(W_N)$, recall that for $A,B \subset \Z^d$
\begin{align*}
\text{Cap}(A \cup B) \leq \text{Cap}(A) + \text{Cap}(B),
\end{align*}
see \cite[Proposition 2.2.1]{LawlerIntersections}. From \cite[Lemmas 2.2, 2.5]{GoswamiRodriguezSeveroRadius}, we have that for any $\epsilon>0$ and $i \in \{1,2 \}$,
\begin{align*}
\operatorname{Cap}(P^{(i)}_{N,\lfloor N^{\epsilon}\rfloor})\leq CN/\log N .
\end{align*}
We conclude that
\begin{align*}
\text{Cap}(W_N) \leq 2\text{Cap}(P^{(1)}_{ \alpha N,\lfloor N^{3 \epsilon}\rfloor}) + \text{Cap}(P^{(2)}_{\alpha N ,\lfloor N^{3 \epsilon}\rfloor}) \leq C N \log N,
\end{align*}
which implies
\begin{align*}
\prob[ E_N(h) \cap F_N(h)] \geq  e^{-cN/\log N} .
\end{align*}
Next we bound $\prob[D_N(\psi^{V_N}, h_*+\delta)]$. We first claim that for $h<h_*$,  $\prob[D_N(\varphi, h)] \to 1$ as $N \to \infty$. From \cite{SznitmanDisconnection}, we have for $h<h_*$ and $R \in \N$
\begin{align*}
\prob[ \nlr{}{\geq h}{B_R }{\partial B_{2R}} ]  \leq Ce^{-cR^{d-2}}.
\end{align*}
Applying a union bound and translation invariance with this estimate for $R = \lfloor N^{\epsilon /2}\rfloor$ proves the claim. We then have for $h<h_*$ and $ \overline{\delta} < (h_*-h)/2$
\begin{align*}
\prob[D_N(\psi^{V_N}, h)] \geq \prob[D_N(\varphi, h+\overline{\delta}), \sup_{x \in U_N}\xi^{V_N} \leq \overline{\delta}] \geq \prob[D_N(\varphi, h+\overline{\delta})] - \prob[\sup_{x \in U_N}\xi^{V_N} > \overline{\delta}].
\end{align*}
Since $h+\overline{\delta}<h_*$, we have $\prob[D_N(\varphi, h+\overline{\delta})] \to 1$ by the earlier claim. By an earlier calculation, we also have $\prob[\sup_{x \in U_N}\xi^{V_N} > \overline{\delta}] \to 0$. We conclude that $\prob[D_N(\psi^{V_N}, h)] \to 1$ for $h<h_*$. We can now apply Lemma \ref{lem: cap-lowerbound}, and conclude that
\begin{align*}
\prob[D_N(\psi^{V_N}, h_* + \delta)]  \geq \exp(-c\operatorname{Cap}_{V_N}(U_N)).
\end{align*}
We are left to derive an upper bound for $\operatorname{Cap}_{V_N}(U_N)$. Since $ H_{A \cup B} \leq  H_{A}$, we have $P^x[ H_{A \cup B}> T_{U}] \leq P^x[ H_{A }> T_{U}] $. This implies that for $A,B \subset U$
\begin{align*}
\operatorname{Cap}_U(A \cup B) = \sum_{x \in A \cup B}P^x[H_{A \cup B}> T_U] &\leq  \sum_{x \in A}P^x[ H_{A \cup B}> T_U] +  \sum_{x \in B}P^x[ H_{A \cup B}> T_U] \\
&\leq  \sum_{x \in A}P^x[ H_{A }> T_U] +  \sum_{x \in B}P^x[  H_{ B}> T_U] \\
&=  \operatorname{Cap}_U(A) + \operatorname{Cap}_U(B). 
\end{align*}
Hence we have
\begin{align*}
\operatorname{Cap}_{V_N}(U_N)  \leq \text{Cap}_{V_N}( P^{(2)}_{\alpha N,\lfloor N^{\epsilon} \rfloor}) + \text{Cap}_{V_N}(\alpha N e_2 + P^{(1)}_{N,\lfloor N^{\epsilon} \rfloor})+\text{Cap}_{V_N}(Ne_1+P^{(2)}_{\alpha N,\lfloor N^{\epsilon} \rfloor}).
\end{align*}
Note that for $K \subset U_1 \subset U_2$, $\text{Cap}_{U_2}(K) \subset \text{Cap}_{U_1}(K)$, which implies
\begin{align*}
\text{Cap}_{V_N}( P^{(2)}_{\alpha N,\lfloor N^{\epsilon} \rfloor}) \leq \text{Cap}_{P^{(2)}_{\alpha N,\lfloor N^{2\epsilon} \rfloor}}(P^{(2)}_{\alpha N,\lfloor N^{\epsilon} \rfloor}) .
\end{align*}
Applying a similar bound to the remaining terms, and using \cite[Lemmas 2.2 and 2.5]{GoswamiRodriguezSeveroRadius} we conclude that 
\begin{align*}
\operatorname{Cap}_{V_N}(U_N) \leq C N/\log N .
\end{align*}
To summarize, for $h <h_*$
\begin{align*}
\prob[\exists x,y \in \mathcal S_N(h) \cap B_N, \: \rho_h(x,y) > \alpha N ] \geq \prob[D_N(h) \cap E_N(h) \cap F_N(h)] \geq e^{-cN/\log N},
\end{align*}
which finishes the proof for $d = 3$.

For $d \geq 4$, define the set $Z_n = P_0$, where we use the notation from the beginning of the proof. We define the event
\begin{align*}
G_{N}(h) = \{\forall x \in P_0, \: \varphi_x \geq h, \: \forall y \in \partial P_0, \: \varphi_y<h \}
\end{align*}
and observe that
\begin{align*}
G_{N}(h)  \subset \{0 \xleftrightarrow{\geq h} Ne_1, \: \rho_h(x,y) > \alpha N \} \subset \{\exists x,y \in \mathcal S_N(h) \cap B_N, \: \rho_h(x,y) > \alpha N \}.
\end{align*}
Rerunning the FKG-inequality argument in \cite[\S 3.3]{GoswamiRodriguezSeveroRadius} yields
\begin{align*}
\prob[G_{N}(h) ] \geq e^{-cN}
\end{align*}
for $d \geq 4$. This finishes the proof of the theorem.
\end{proof}
\noindent{\bf Acknowledgements.}
I would like to thank Ron Rosenthal for advising me throughout this project and Pierre-Fran\c{c}ois Rodriguez for an insightful discussion.

\bibliographystyle{acm}
\bibliography{references}
\end{document}